\documentclass[12pt,reqno]{amsart}

\usepackage{amsmath,amssymb,amsthm, geometry}
\usepackage{setspace}
\usepackage{mathrsfs}
\usepackage{graphicx}
\usepackage{enumerate}
\usepackage[colorlinks=false, urlcolor=blue, final]{hyperref}
\usepackage[all]{xy}
\usepackage{pdfsync}

\geometry{left=3cm, right=3cm, top=2cm, bottom=2cm}

\newtheorem{thm}{Theorem}[section]

\newtheorem{lem}[thm]{Lemma}
\newtheorem{cor}[thm]{Corollary}
\newtheorem{prop}[thm]{Proposition}

\theoremstyle{definition}
\newtheorem{defn}[thm]{Definition}
\newtheorem{rmk}[thm]{Remark}

\allowdisplaybreaks

\theoremstyle{remark}
\newtheorem{exs}[thm]{Examples}
\newtheorem{ex}[thm]{Example}

\numberwithin{equation}{section}

\DeclareMathOperator{\KP}{KP}
\DeclareMathOperator{\spn}{span}
\DeclareMathOperator{\lsp}{span}
\DeclareMathOperator{\Soc}{Soc}
\DeclareMathOperator{\End}{End}

\newcommand{\Z}{\mathbb{Z}}
\newcommand{\N}{\mathbb{N}}

\newcommand{\inv}{^{-1}}

\title[The socle and semisimplicity of a Kumjian-Pask algebra]
{\boldmath{The socle and semisimplicity of a Kumjian-Pask algebra}}

\author[J. H. Brown]{Jonathan H. Brown}
\author[A. an Huef]{Astrid an Huef}
\address{Department of Mathematics and Statistics\\
University of Otago\\
Dunedin 9054\\
New Zealand}
\email{jbrown@maths.otago.ac.nz, astrid@maths.otago.ac.nz}
\thanks{This research was supported by the University of Otago.}

\date{9 January, 2012}

\begin{document}
\begin{abstract}
The Kumjian-Pask algebra $\KP(\Lambda)$ is a graded algebra associated to a higher-rank graph $\Lambda$ and is a generalization of the Leavitt path algebra of a directed graph. We  analyze the minimal left-ideals of $\KP(\Lambda)$, and  identify its socle as a graded ideal by describing its generators in terms of a subset of vertices of the graph. We characterize when $\KP(\Lambda)$  is semisimple, and obtain a complete structure theorem for a semisimple Kumjian-Pask algebra.
\end{abstract}

\keywords{Kumjian-Pask algebra, Leavitt path algebra, higher-rank graph, $k$-graph, directed graph, socle, left ideals, semisimplicity}
\subjclass[2010]{16S10, 16W50, 16D70}
\maketitle

\section{Introduction}

The Leavitt path algebra $L(E)$ of a directed graph $E$ over a field is an algebra defined by generators and relations encoded in the graph.  Abrams and Aranda Pino \cite{AA05} constructed  Leavitt path algebras in analogy with the graph $C^*$-algebras introduced in \cite{KPRR97}.  Leavitt path algebras are so called because they generalize the algebras without invariant basis number studied by Leavitt in \cite{Lea57}.  Both Leavitt path algebras and graph $C^*$-algebras provide a rich source of examples with substantial structure theories and have attracted interest from a broad range of researchers.  Results and techniques used to study Leavitt path algebras are now applied to study the $C^*$-algebras of directed graphs (for example \cite{AMP07, AMO09})  and vice-versa (for example, \cite{AA05, AA06, Tom07}).  In this paper we study analogues of  Leavitt path algebras associated to higher-rank graphs; these algebras are called  Kumjian-Pask algebras. 

A higher-rank graph  $\Lambda$ is a combinatorial object in which ``paths'' have a $k$-dimensional shape or degree, and a $1$-graph reduces to a directed graph. Kumjian and Pask introduced $k$-graphs and the associated $C^*$-algebras $C^*(\Lambda)$ in \cite{KumPas00} to provide  models for  higher-rank versions of  Cuntz-Krieger algebras studied by Robertson and Steger in \cite{RS}.  The $C^*$-algebras of $k$-graphs have been extensively studied (see, for example, \cite{Bur09, DPY08, ES,  FMY05, KP06, PRRS06,  RSY03, RS09, SZ08, Yee07}), but open problems remain. 
For example, Evans and Sims 
have recently found necessary conditions on $\Lambda$ for $C^*(\Lambda)$ to be 
approximately finite-dimensional, but the converse seems elusive unless the graph is finitely aligned and has finitely many vertices \cite{ES}.
Evans and Sims also proved that if $\Lambda$ is a finitely-aligned  $k$-graph with finitely many vertices, then
$C^*(\Lambda)$ is finite-dimensional if and only if $\Lambda$ contains
no ``cycles'' (and if so then $C^*(\Lambda)$ is isomorphic to a direct
sum of matrix algebras indexed by the set of sources of of the graph).

The Kumjian-Pask algebra $\KP(\Lambda)$ is an  algebraic version of the $C^*$-algebra associated to a row-finite $k$-graph with no sources.  Defined and studied in \cite{A-PCaHR11}, $\KP(\Lambda)$ has a  universal property based on a family of generators satisfying suitable Cuntz-Krieger relations,  is $\Z^k$-graded,  and its two-sided graded ideal structure is given by saturated and hereditary subsets of vertices of the $k$-graph. Example~7.1 in \cite{A-PCaHR11} shows that there are Kumjian-Pask algebras that do not arise as  Leavitt path algebras. 

In this paper we identify the minimal left-ideals of the Kumjian-Pask algebra as those associated to vertices called ``line points'',  use this analysis to identify the socle of the algebra as a graded ideal, and characterize when the algebra is semisimple.  We also obtain  a complete structure theorem for semisimple Kumjian-Pask algebras reminiscent  of the Wedderburn-Artin Theorem.

The socle of a Leavitt path algebra has been studied in \cite{ARS09, PBGM10,A-PM-BM-GS-M08}.  We follow \cite{A-PM-BM-GS-M08} in our analysis of the minimal left-ideals in a Kumjian-Pask algebra and  \cite{APPM10} to prove our structure theorem for semisimple algebras.    But our methods are often simpler than those of  \cite{APPM10} and \cite{A-PM-BM-GS-M08}, even when $k=1$. The proofs in \cite{APPM10} and \cite{A-PM-BM-GS-M08} rely on delicate constructions  of specific cycles which we replace  with  comparison of degrees of paths.   Also, our analysis of the direct sum in a semisimple algebra goes a bit further than \cite{APPM10} because we can  write our direct sum over an explicit index set  of equivalence classes of vertices in Theorem~\ref{thm: KP ss matrix}. 
In
particular, Theorem~\ref{thm: KP ss matrix} shows first that  a finite-dimensional
Kumjian-Pask algebra cannot  arise from a  $k$-graph with no sources,
and second, that a semisimple  Kumjian-Pask algebra is isomorphic to a
Leavitt path algebra (see Remark~\ref{rmk-ss} ).


\section{Preliminaries on $k$-graphs and Kumjian-Pask algebras}\label{sec: prelim}
We start by establishing our conventions and proving some basic properties of Kumjian-Pask algebras.

Let $k$ be a positive integer. We consider the additive semigroup $\N^k$ as a category with one object.  Following \cite[Definition~1.1]{KumPas00}, we say a countable  category $\Lambda=(\Lambda^0, \Lambda, r,s)$, with objects $\Lambda^0$, morphisms $\Lambda$, range map $r$ and source map $s$, is  a \emph{$k$-graph} if there exists a functor $d:\Lambda\to \N^k$ with the \emph{unique factorization property}:   if $d(\lambda)=m+n$ for some $m,n\in \N^k$, then there exist unique $\mu,\nu\in \Lambda$ such that $r(\nu)=s(\mu)$ and $d(\mu)=m, d(\nu)=n$ with $\lambda=\mu\nu$.  
Since we think of $\Lambda$ as a generalized graph, we call $\lambda\in \Lambda$ a \emph{path} in $\Lambda$ and $v\in \Lambda^0$ a \emph{vertex}.

 Let $n\in \N^k$. We use the factorization property to identify $\Lambda^0$ and the paths of degree $0$, and then define $\Lambda^n:=d\inv(n)$ and $\Lambda^{\neq n}=\Lambda\setminus \Lambda^n$.   For $X, Y\subset \Lambda^0$  we denote by 
\[
X\Lambda:=\{\lambda\in \Lambda: r(\lambda)\in X\},\quad\Lambda Y:=\{\lambda\in \Lambda: s(\lambda)\in Y\},\quad X\Lambda Y:=X\Lambda\cap \Lambda Y,
\]
and  $X\Lambda^n:=X\Lambda\cap \Lambda^n,  \Lambda^n Y:=\Lambda Y\cap \Lambda^n,$ and $X\Lambda^n Y:=X\Lambda Y\cap \Lambda^n$.
For simplicity, we write $v\Lambda$ for $\{v\}\Lambda$.

A $k$-graph $\Lambda$ is \emph{row-finite} if $|v\Lambda^n|<\infty$ for all $v\in \Lambda^0, n\in \N^k$ and has \emph{no sources} if $v\Lambda^n\neq \emptyset$ for all $v\in\Lambda^0, n\in \N^k$.  We assume throughout that $\Lambda$ is a row-finite $k$-graph with no sources, that is $0<|v\Lambda^n|<\infty$ for all $v\in \Lambda^0, n\in \N^k$.

For $m=(m_1,\ldots, m_k),n=(n_1,\ldots, n_k)\in \N^k$ we say $m\leq n$ if $m_i\leq n_i$ for all $i$.  We denote the join of $m$ and $n$ by $m\vee n$: so $m\vee n=(\max\{m_1,n_1\},\ldots \max\{m_k,n_k\})$.

\begin{exs} These examples first appear in \cite[Example~1.3 and Examples~1.7]{KumPas00}.  We discuss them here to establish notation.
\begin{enumerate} \item Let $E=(E^0, E^1, r_E, s_E)$ be a directed graph.  Our convention is that  a path $\mu$ in $E$ of length $|\mu|$ is a concatenation $\mu=\mu_1\mu_2\cdots\mu_{|\mu|}$ of edges $\mu_i\in E^1$ with $s_E(\mu_i)=r_E(\mu_{i+1})$. (The usual convention in papers on Leavitt path algebras is to write  $\mu=\mu_1\mu_2\cdots\mu_{|\mu|}$  with $r_E(\mu_i)=s_E(\mu_{i+1})$. We deviate from the usual convention so that concatenation of paths is compatible with  composition of morphisms.)
 Take $\Lambda_E^0$ to be the set of vertices, $\Lambda_E$   the collection of finite paths, composition to be concatenation of paths, and $r=r_E$, $s=s_E$. Then  $\Lambda_E=(E^0, \Lambda, r,s)$ is a category. Equipped with the length function $d:\Lambda_E\to\N$, $\Lambda_E$ is a $1$-graph. 
Conversely, every $1$-graph $\Lambda$ gives a directed graph $E_\Lambda$ using the morphisms of degree $1$ for the edges. \item Let $\Omega_k=\{(m,n)\in\N^k\times \N^k:m\leq n\}$.  Then $\Omega_k$ is category with objects  $\N^k$, morphisms $(m,n)\in \Omega_k$,  $r(m,n)=m$ and $s(m,n)=n$, and composition $(m,n)(n,p)=(m,p)$.  Equipped with degree map $d$, defined by $d(m,n)=n-m$, $\Omega_k$ is a $k$-graph.
\end{enumerate}
\end{exs}

As in \cite[Definition~2.1]{KumPas00}, define the \emph{infinite path space} of $\Lambda$ to be 
\[
\Lambda^\infty=\{x:\Omega_k\to \Lambda: x \quad\text{is a degree-preserving functor}\}
\]
 and $v\Lambda^\infty=\{x\in \Lambda^\infty: x(0)=v\}$.  Since we assume that $\Lambda$ has no sources, $v\Lambda^\infty\neq \emptyset$ for all $v\in \Lambda^0$. As in \cite[Remark~2.2]{KumPas00}, $x=y\in \Lambda^\infty$ if and only if $x(0,n)=y(0,n)$ for all $n\in \N^k$.  For $p\in \N^k$ we define a map $\sigma^p:\Lambda^\infty\to \Lambda^\infty$ by $\sigma^p(x)(m,n)=x(m+p,n+p)$.   Note that $x=x(0,p)\sigma^p(x)$.  We say that $x\in \Lambda^\infty$ is \emph{periodic} if there exists  $p\neq q\in \N^k$ such that $\sigma^p(x)=\sigma^q(x)$: that is for all $m,n\in \N^k$ $x(m+p,n+p)=x(m+q, n+q)$; $x$ is called \emph{aperiodic} otherwise.  A $k$-graph $\Lambda$ is called \emph{aperiodic} if for every vertex $v\in \Lambda^0$ there exists an aperiodic path in $v\Lambda^\infty$.

\begin{ex} Let $\Lambda$ be a $1$-graph and suppose that $x\in \Lambda^\infty$ is periodic, that is, there exists $p\neq q\in \N$ such that $\sigma^p(x)=\sigma^q(x)$.  Since $\N$ is totally ordered, we may assume $p<q$.  Then $x(p,q)$ is a  cycle in the sense that $r(x(p,q))=s(x(p,q))$.\end{ex}

Let $\Lambda$ be a row-finite $k$-graph with no sources and $R$  a commutative ring with $1$. Let $G(\Lambda^{\neq 0}):=\{\lambda^*:\lambda\in \Lambda^{\neq 0}\}$ be a copy of the paths  called \emph{ghost paths}. We extend $r, s$ and $d$ to 
$G(\Lambda^{\neq 0})$ by $r(\lambda^*)=s(\lambda)$, $s(\lambda^*)=r(\lambda)$ and $d(\lambda^*)=-d(\lambda)$. We also extend $d$ to words $w=w_1\cdots w_{|w|}$ on $\Lambda^0\cup\Lambda^{\neq 0}\cup G(\Lambda^{\neq 0})$ by $d(w)=\sum_i d(w_i)$.

We now recall \cite[Definition~3.1]{A-PCaHR11}.
A \emph{Kumjian-Pask $\Lambda$-family} $(P,S)$ in an $R$-algebra $A$ consists of  functions $P:\Lambda^0\to A$ and $S:\Lambda^{\neq 0}\cup G(\Lambda^{\neq 0})\to A$ such that
\begin{enumerate}
\item[(KP1)]  $\{P_v:v\in \Lambda^0\}$ is a set of mutually orthogonal idempotents;  
\item[(KP2)]  for $\lambda, \mu\in \Lambda^{\neq 0}$ with $r(\mu)=s(\lambda)$ we have
\[S_{\lambda}S_\mu=S_{\lambda\mu}\quad S_{\mu^*}S_{\lambda^*}=S_{(\lambda\mu)^*}\quad P_{r(\lambda)}S_\lambda=S_\lambda=S_\lambda P_{s(\lambda)}\quad P_{s(\lambda)}S_{\lambda^*}=S_{\lambda^*}=S_{\lambda^*}P_{r(\lambda)};\]
\item[(KP3)] for all $\lambda,\mu\in \Lambda^{\neq 0}$ with $d(\lambda)=d(\mu)$ we have
$S_{\lambda^*}S_\mu=\delta_{\lambda,\mu}P_{s(\lambda)}$;
\item[(KP4)] for all $v\in \Lambda^0$ and $n\in \N^k\setminus\{0\}$ we have
$P_v=\sum_{\lambda\in v\Lambda^n} S_\lambda S_{\lambda^*}$.
\end{enumerate}

It is proved in  \cite[Theorem~3.4]{A-PCaHR11} that there is 
an $R$-algebra $\KP_R(\Lambda)$ generated by a Kumjian-Pask $\Lambda$-family $(p, s)$ with the following universal property:  whenever $(Q,T)$ is a Kumjian-Pask $\Lambda$-family in an $R$-algebra $A$, there is a
unique $R$-algebra homomorphism $\pi_{Q,T}:\KP_R(\Lambda)\to A$ such that
\begin{equation*}\label{defpiqt}
\pi_{Q,T}(p_v) = Q_v, \quad \pi_{Q,T}(s_{\lambda}) = T_{\lambda}, \quad \pi_{Q,T}(s_{\mu^*}) =T_{\mu^*}
\end{equation*}
for $v\in \Lambda^0$ and $\lambda,\mu\in\Lambda^{\neq 0}$. The generating family $(p,s)$ is called the universal Kumjian-Pask family.    It is a consequence of the \emph{Kumjian-Pask relations} (KP1)--(KP4) that $\KP_R(\Lambda)=\spn_R\{s_\mu s_\nu^*:\mu,\nu\in \Lambda \text{\ with $s(\mu)=s(\nu)$}\}$ (with the convention that if $v\in \Lambda_0$ then $s_v=p_v$).  Indeed, if $s(\mu)=s(\nu)$ then $s_\mu s_{\nu^*}\neq 0$ in $\KP_R(\Lambda)$.  For every nonzero $a\in \KP_R(\Lambda)$ and $n\in \N^k$ there exist $m\geq n$ and  a finite subset $F\subset\Lambda\times \Lambda^m$ such that $s(\alpha)=s(\beta)$ for all $(\alpha,\beta)\in F$ and 
\begin{equation}\label{normalform}
a=\sum_{(\alpha,\beta)\in F \subset\Lambda\times \Lambda^m}r_{\alpha,\beta} s_{\alpha} s_{\beta^*}\ \text{with}\ r_{\alpha,\beta}\in K\setminus\{0\};
\end{equation}
in this case we say that $a$ is written in \emph{normal form} \cite[Lemma~4.2]{A-PCaHR11}.  The condition $F\subset \Lambda\times \Lambda^m$ says that all the $\beta$ appearing in \eqref{normalform} are of the same degree $m$. It is proved in \cite[Theorem~3.4]{A-PCaHR11} that the subgroups 
\[
\KP_R(\Lambda)_n:=\spn_R\{s_\lambda s_{\mu^*}:\lambda,\mu\in \Lambda\ \text{and}\ d(\lambda)-d(\mu)=n\}\quad\quad (n\in\Z^k)
\]
give a $\Z^k$-grading of $\KP_R(\Lambda)$.

Lemma~4.3 of \cite{A-PCaHR11} says that for all nonzero $a\in \KP_R(\Lambda)$ there exist $\mu,\nu\in \Lambda$ such that $s_{\mu^*}as_\nu\neq 0$; the  proof of this lemma actually gives a stronger result, and we can then strengthen the conclusion of \cite[Proposition~4.9]{A-PCaHR11} as well.  

\begin{lem}\label{lem: strengthen} Suppose $0\neq a=\sum_{(\alpha,\beta)\in F} r_{\alpha,\beta} s_{\alpha}s_{\beta^*}\in\KP_R(\Lambda)$ is in normal form. 
\begin{enumerate}
\item\label{lem: strengthen 1}
For all $(\mu,\nu)\in F$, $$ s_{\mu^*}as_{\nu}=r_{\mu,\nu}p_{s(\mu)}+\sum_{\substack{(\alpha,\nu)\in F\\ d(\alpha)\neq d(\mu)}} r_{\alpha,\nu}s_{\mu^*}s_{\alpha}\neq 0.$$
\item\label{lem: strengthen 2}
If there exists $v\in \{s(\alpha):(\alpha,\beta)\in F\}$ such that $v\Lambda^\infty$ contains an aperiodic path, then there exist $\gamma,\eta\in \Lambda$ such that $s_{\gamma^*}as_{\eta}=rp_{s(\gamma)}\neq 0$.
\end{enumerate}
\end{lem}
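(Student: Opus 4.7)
For part~(1), I would expand $s_{\mu^*}as_\nu$ directly using the Kumjian--Pask relations. Because $a$ is in normal form with every $\beta$ of the common degree $m=d(\nu)$, relation (KP3) collapses $s_{\beta^*}s_\nu$ to $\delta_{\beta,\nu}p_{s(\nu)}$, so $as_\nu=\sum_{(\alpha,\nu)\in F}r_{\alpha,\nu}s_\alpha$. Applying $s_{\mu^*}$ on the left, (KP3) acts again on those terms with $d(\alpha)=d(\mu)$, selecting only $\alpha=\mu$ and producing $r_{\mu,\nu}p_{s(\mu)}$, while the terms with $d(\alpha)\neq d(\mu)$ remain as $s_{\mu^*}s_\alpha$. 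To see the result is nonzero, I would appeal to the $\Z^k$-grading of $\KP_R(\Lambda)$: the summand $r_{\mu,\nu}p_{s(\mu)}$ lies in degree~$0$, while each $s_{\mu^*}s_\alpha$ with $d(\alpha)\neq d(\mu)$ lies in degree $d(\alpha)-d(\mu)\neq 0$. Hence the degree-$0$ component of the total is precisely $r_{\mu,\nu}p_{s(\mu)}$, which is nonzero because $r_{\mu,\nu}\neq 0$ (by definition of normal form) and vertex idempotents in $\KP_R(\Lambda)$ are nonzero.

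For part~(2), I would fix $(\alpha_0,\beta_0)\in F$ with $s(\alpha_0)=v$, set $b=s_{\alpha_0^*}as_{\beta_0}$, and invoke part~(1) to write
\[
b=r\,p_v+\sum_{\alpha\in I}r_{\alpha,\beta_0}\,s_{\alpha_0^*}s_\alpha,
\]
with $r=r_{\alpha_0,\beta_0}\neq 0$ and $I$ the finite set of $\alpha$ such that $(\alpha,\beta_0)\in F$ and $d(\alpha)\neq d(\alpha_0)$. Choose an aperiodic $x\in v\Lambda^\infty$ and set $\lambda_n=x(0,n)$. The plan is to show $s_{\lambda_n^*}bs_{\lambda_n}=r\,p_{s(\lambda_n)}$ for $n$ large, so that $\gamma=\alpha_0\lambda_n$ and $\eta=\beta_0\lambda_n$ satisfy $s_{\gamma^*}as_\eta=s_{\lambda_n^*}bs_{\lambda_n}=r\,p_{s(\gamma)}\neq 0$. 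The $rp_v$ contribution gives $r\,p_{s(\lambda_n)}$ via (KP2)--(KP3), and (KP2) rewrites each cross-term as $s_{(\alpha_0\lambda_n)^*}s_{\alpha\lambda_n}$.

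The main obstacle is showing these cross-terms vanish for $n$ large. When $r(\alpha)\neq r(\alpha_0)$ the product is zero at once by (KP1)--(KP2). When $r(\alpha)=r(\alpha_0)$, let $e=d(\alpha_0)\wedge d(\alpha)$ denote the coordinatewise minimum. By (KP3) the product vanishes unless the degree-$(e+n)$ initial segments of $\alpha_0\lambda_n$ and $\alpha\lambda_n$ coincide. If the degree-$e$ initial segments of $\alpha_0$ and $\alpha$ already differ, this fails for every $n$; otherwise write $\alpha_0=\pi\alpha_0''$ and $\alpha=\pi\alpha''$ by unique factorization, and unpacking the equality of the degree-$(e+n)$ initial segments would force
\[
\sigma^{d(\alpha'')}(x)(0,M)=\sigma^{d(\alpha_0'')}(x)(0,M)\quad\text{with}\quad M=n-d(\alpha_0'')-d(\alpha'').
\]
Since $d(\alpha_0'')\neq d(\alpha'')$ (as $d(\alpha_0)\neq d(\alpha)$), requiring this for arbitrarily large $n$ (equivalently, for all $M$) would yield $\sigma^{d(\alpha'')}(x)=\sigma^{d(\alpha_0'')}(x)$, contradicting aperiodicity of $x$. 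So for each of the finitely many $\alpha\in I$ there is a threshold $n_\alpha$ beyond which the cross-term is zero; taking $n$ above $\max_{\alpha\in I} n_\alpha$ completes~(2). This periodicity-versus-degree-comparison step is the clean replacement for the cycle constructions alluded to in the introduction.
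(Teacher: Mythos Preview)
Your argument for part~(1) is exactly the paper's: collapse via (KP3) on the right, then on the left, and read off the degree-$0$ component to establish nonvanishing.

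For part~(2) your approach is correct but differs from the paper's. The paper simply invokes part~(1) to reduce to an element of the form $rp_v+\sum r_{\alpha,\nu}s_{\mu^*}s_\alpha$ and then appeals to the proof of \cite[Proposition~4.9]{A-PCaHR11} together with \cite[Lemma~6.2]{HRSW}, observing that the latter only needs an aperiodic path at $v$ rather than global aperiodicity. You instead give a self-contained argument: conjugate by initial segments $\lambda_n=x(0,n)$ of the chosen aperiodic path and show that each cross-term $s_{(\alpha_0\lambda_n)^*}s_{\alpha\lambda_n}$ eventually vanishes by comparing the degree-$(e+n)$ initial segments and deducing that persistence would force $\sigma^{d(\alpha'')}(x)=\sigma^{d(\alpha_0'')}(x)$. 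This is essentially an unpacking of what those cited results do, and it is valid; two small points worth tightening are that the vanishing criterion you use really comes from the standard expansion $s_{\mu^*}s_\nu=\sum_{\Lambda^{\min}(\mu,\nu)}s_\rho s_{\tau^*}$ (a joint consequence of (KP3) and (KP4), not (KP3) alone), and that the ``threshold'' $n_\alpha$ exists because the set of $M$ with $\sigma^{d(\alpha'')}(x)(0,M)=\sigma^{d(\alpha_0'')}(x)(0,M)$ is downward closed in $\N^k$ and proper. The payoff of your route is a proof that stays inside the present paper; the paper's route is shorter on the page but relies on the reader chasing two external references.
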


\begin{proof} \eqref{lem: strengthen 1} Fix $(\mu, \nu)\in F$.  If $(\alpha,\beta)\in F$ then $d(\beta)=d(\nu)$.  If  $\mu\neq \alpha$ and $s_{\mu^*}s_{\alpha}\neq 0$ then $d(\mu)\neq d(\alpha)$ by (KP3). So another two applications of (KP3) give
\begin{equation*}
s_{\mu^*}as_{\nu}
=\sum_{(\alpha,\beta)\in F} r_{\alpha,\beta} s_{\mu^*}s_{\alpha}(s_{\beta^*}s_{\nu})
=\sum_{(\alpha,\nu)\in F} r_{\alpha,\nu} s_{\mu^*}s_{\alpha}
=r_{\mu,\nu}p_{s(\mu)}+\sum_{\substack{(\alpha,\nu)\in F\\ d(\alpha)\neq d(\mu)}} r_{\alpha,\nu}s_{\mu^*}s_{\alpha}.
\end{equation*}  
Therefore the zero-graded component of $s_{\mu^*}as_{\nu}$ is precisely $r_{\mu,\nu}p_{s(\mu)}$. Since $r_{\mu,\nu}\neq 0$, $r_{\mu,\nu}p_{s(\mu)}\neq 0$ by \cite[Theorem~3.4]{A-PCaHR11}.   Thus $s_{\mu^*}as_{\nu}\neq 0$.

\eqref{lem: strengthen 2} Pick $(\mu,\nu)\in F$ such that $v=s(\mu)$.  Then  for this $(\mu,\nu)\in F$ we have  \[0\neq s_{\mu^*}as_\nu=r_{\mu,\nu}p_{s(\mu)}+\sum_{(\alpha,\nu)\in F}r_{\alpha,\nu} s_{\mu^*}s_{\alpha}\] by \eqref{lem: strengthen 1}. 
The result now follows as in the proof of \cite[Proposition~4.9]{A-PCaHR11} after observing that  while \cite[Lemma~6.2]{HRSW} is stated for aperiodic $k$-graphs the proof only requires the existence of an aperiodic path at the vertex $v$.
\end{proof}

\begin{lem}\label{lem local units} $\KP_R(\Lambda)$ has a countable set of  local units. \end{lem}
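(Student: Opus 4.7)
The plan is to exhibit an explicit countable family of idempotents in $\KP_R(\Lambda)$ which serves as local units, indexed by finite subsets of the (countable) vertex set $\Lambda^0$.

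For each finite subset $V\subset\Lambda^0$, set $p_V:=\sum_{v\in V}p_v$. By (KP1) the $p_v$ are mutually orthogonal idempotents, so $p_V$ is an idempotent in $\KP_R(\Lambda)$. Since $\Lambda$ is assumed to be a countable category, $\Lambda^0$ is countable, hence the collection
\[
\mathcal{U}:=\{p_V:V\subset\Lambda^0\text{ finite}\}
\]
is a countable subset of $\KP_R(\Lambda)$.

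Next I would verify the local unit property. It suffices to show that every finite subset of $\KP_R(\Lambda)$ admits such a $p_V$ that acts as an identity on it. Given $a_1,\dots,a_n\in\KP_R(\Lambda)$, I write each $a_i$ as a finite $R$-linear combination of spanning monomials $s_\mu s_{\nu^*}$ with $s(\mu)=s(\nu)$, and let $W\subset\Lambda^0$ be the finite set of all vertices of the form $r(\mu)$ or $r(\nu)$ appearing in any of these expansions (together with every $v$ if $a_i$ has a $p_v$-summand). Using (KP2), for any such monomial, $p_{r(\mu)}s_\mu s_{\nu^*}=s_\mu s_{\nu^*}=s_\mu s_{\nu^*}p_{r(\nu)}$, and by orthogonality from (KP1), all other $p_w$ with $w\in W$ annihilate $s_\mu s_{\nu^*}$ on either side. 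Summing and using linearity, $p_W a_i=a_i=a_i p_W$ for each $i$.

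Since no delicate argument is required—everything follows from (KP1), (KP2), countability of $\Lambda^0$, and the fact that $\KP_R(\Lambda)$ is spanned by the monomials $s_\mu s_{\nu^*}$—there is no real obstacle; the only thing to check carefully is the bookkeeping that chooses $W$ large enough to contain $r(\mu)$ and $r(\nu)$ for every monomial in the (finitely many) expansions of the $a_i$, so that (KP1) and (KP2) combine to give the two-sided identity action.
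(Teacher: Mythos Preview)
Your proof is correct and follows essentially the same approach as the paper: both define the local units as $\sum_{v\in X}p_v$ over finite subsets $X\subset\Lambda^0$, and verify the identity action on an arbitrary element by writing it in terms of the spanning monomials $s_\mu s_{\nu^*}$ and collecting the finitely many ranges $r(\mu),r(\nu)$. Your version is slightly more explicit (handling several $a_i$ at once and spelling out the roles of (KP1) and (KP2)), but there is no substantive difference.
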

\begin{proof}
Let $a\in \KP_R(\Lambda)$, and write $a$ in normal form $a=\sum_{(\alpha,\beta)\in F}r_{\alpha,\beta} s_{\alpha} s_{\beta^*}$. 
For a finite subset $X$ of $\Lambda^0$  define $u_X=\sum_{v\in X}p_v$.   Take $W=\{r(\alpha), r(\beta):(\alpha,\beta)\in F\}$.  Then $u_Wa=a=au_W$. Thus $\{ u_{X} : X \text{\ is a finite subset of\ }\Lambda^0\}$ is a set of local units of $\KP_R(\Lambda)$; it is countable because $\Lambda^0$ is countable.
\end{proof}

A subset $H$ of $\Lambda^0$ is \emph{hereditary} if $r(\lambda)\in H$ for some $\lambda\in\Lambda$ implies $s(\lambda)\in H$. A subset $H$ of $\Lambda^0$  is \emph{saturated} if 
$v\in \Lambda^0, n\in \N^k$ and $s(v\Lambda^n)\subset H$ implies $v\in H$.  For every subset $W\subset \Lambda^0$, there exists a smallest subset $\overline{W}$ of $\Lambda^0$ such that $\overline{W}$ is both hereditary and saturated; we call $\overline{W}$ the \emph{saturated hereditary  closure} of $W$.  

\begin{lem}\label{oldRemark} Let $W$  be a subset of $\Lambda^0$ and let $\Sigma^0(W)$ be the smallest hereditary subset of $\Lambda^0$ containing $W$.  If $v\in\overline{W}$ then $v\Lambda\Sigma^0(W)\neq \emptyset$.
\end{lem}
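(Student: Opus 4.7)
The plan is to build $\overline{W}$ as a countable increasing union of hereditary sets $H_0\subset H_1\subset \cdots$ starting from $\Sigma^0(W)$, and then induct on the smallest level $m$ at which $v$ enters the union.

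More precisely, I would set $H_0:=\Sigma^0(W)$, and inductively define
\[
H_{m+1}:=\bigl\{v\in \Lambda^0 : s(v\Lambda^n)\subset H_m \text{ for some } n\in \N^k\bigr\}.
\]
Taking $n=0$ shows $H_m\subset H_{m+1}$. I would first verify, by induction, that each $H_m$ is hereditary: given $H_m$ hereditary and $r(\lambda)\in H_{m+1}$ with $s(r(\lambda)\Lambda^n)\subset H_m$, every $\sigma\in s(\lambda)\Lambda^n$ extends to $\lambda\sigma\in r(\lambda)\Lambda^{d(\lambda)+n}$, which by unique factorization can be refactored as $\mu\nu$ with $d(\mu)=n$ and $d(\nu)=d(\lambda)$; then $s(\mu)\in H_m$, so hereditariness of $H_m$ forces $s(\sigma)=s(\nu)\in H_m$, whence $s(\lambda)\in H_{m+1}$.

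With that in hand, $\bigcup_m H_m$ is hereditary (a union of hereditary sets), contains $W$, and is saturated (because $v\Lambda^n$ is finite, so $s(v\Lambda^n)\subset \bigcup_m H_m$ forces $s(v\Lambda^n)\subset H_M$ for some $M$, giving $v\in H_{M+1}$). Conversely a straightforward induction shows $H_m\subset\overline{W}$ for each $m$. Hence $\overline{W}=\bigcup_m H_m$.

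Finally I would prove by induction on $m$ that $v\in H_m$ implies $v\Lambda\Sigma^0(W)\neq\emptyset$. The base case $m=0$ is immediate since the trivial path $v$ lies in $v\Lambda\Sigma^0(W)$. For the inductive step, if $v\in H_{m+1}$, pick $n\in\N^k$ with $s(v\Lambda^n)\subset H_m$ and choose $\lambda\in v\Lambda^n$ (nonempty by the no-sources hypothesis). Then $s(\lambda)\in H_m$, and by the inductive hypothesis there exists $\mu\in s(\lambda)\Lambda\Sigma^0(W)$, so $\lambda\mu\in v\Lambda\Sigma^0(W)$. The only subtlety is the hereditariness check for $H_{m+1}$, which is where one has to use the unique factorization property carefully; everything else is bookkeeping.
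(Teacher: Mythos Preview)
Your argument is correct and follows essentially the same strategy as the paper: build $\overline{W}$ as an increasing union of sets starting from $\Sigma^0(W)$, then step down level by level to produce a path from $v$ into $\Sigma^0(W)$.

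The differences are minor but worth noting. The paper defines the intermediate sets using only the standard generators, setting $\Sigma^N(W)=\bigcup_{i=1}^k\{v:s(v\Lambda^{e_i})\subset\Sigma^{N-1}(W)\}$, and then simply cites \cite[Lemma~5.1]{RSY03} for the fact that $\overline{W}=\bigcup_N\Sigma^N(W)$; this lets it avoid checking hereditariness of the intermediate sets and pass directly to the path-building induction. Your $H_m$ (allowing arbitrary $n\in\N^k$) grow at least as fast and you prove the decomposition from scratch, which forces you to verify that each $H_m$ is hereditary via the unique-factorization argument. That argument is correct: factoring $\lambda\sigma=\mu\nu$ with $d(\mu)=n$ gives $s(\mu)\in H_m$, and then hereditariness of $H_m$ pushes this down along $\nu$ to $s(\sigma)$. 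So your version is more self-contained at the cost of a little extra work, while the paper's is shorter by outsourcing the decomposition lemma.
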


\begin{proof}
For $N\geq 1$ set $\Sigma^N(W)=\bigcup_{i=1}^k\{v\in\Lambda^0:s(v\Lambda^{e_i})\subset\Sigma^{N-1}(W)\}$. Since $\Sigma^0(W)$ is hereditary, $\overline{W}=\bigcup_{N\geq 1}\Sigma^N(W)$ by  \cite[Lemma~5.1]{RSY03}.

Fix $v\in\overline{W}$. Choose $N$ such that $v\in \Sigma^N(W)$.  Then  there exists $i_N\in\{1,\dots, k\}$ such that $s(v\Lambda^{e_{i_N}})\subset \Sigma^{N-1}(W)$.  In particular, there exists $\mu_N\in v\Lambda\Sigma^{N-1}(W)$. If $N=1$ we are done. If not, since $s(\mu_N)\in \Sigma^{N-1}(W)$, there exists $i_{N-1}$ such that $s(s(\mu_N)\Lambda^{e_{i_{N-1}}})\subset \Sigma^{N-1}(W)$. In particular, there exists $\mu_{N-1}\in s(\mu_N)\Lambda\Sigma^{N-2}(W)$. Now $\mu_N\mu_{N-1}\in v\Lambda\Sigma^{N-2}(W)$.  Continue to obtain $\mu:=\mu_N\mu_{N-1}\dots\mu_1 \in v\Lambda\Sigma^0(W)$.
\end{proof} 

For a saturated hereditary  subset $H\subset\Lambda^0$ define $I_H$ to be the ideal of $\KP_R(\Lambda)$ generated by $\{p_v: v\in H\}$.    For an ideal $I$ in $\KP_R(\Lambda)$ define  
$H_I=\{v:p_v\in I\}$.  Then $H_I$ is  saturated hereditary by \cite[Lemma~5.2]{A-PCaHR11}, and if $H$ is saturated hereditary then $H_{I_H}=H$  by \cite[Lemma~5.4]{A-PCaHR11}.  An ideal $I$ in $\KP_R(\Lambda)$ is \emph{graded}  if $I=\oplus_{n\in \Z^k} I\cap \KP_R(\Lambda)_n$, which occurs if and only if $I$ is generated by  homogeneous elements.  An ideal $I$ is \emph{basic} if $tp_v\in I$  and $t\in R\setminus\{0\}$ implies that $p_v\in I$.  If $R=K$ is a field  then every  ideal in $\KP_K(\Lambda)$ is basic, and hence \cite[Theorem~5.1]{A-PCaHR11} implies that the map 
$H\mapsto I_H$ is a lattice isomorphism of the saturated hereditary  subsets of $\Lambda^0$ onto  the graded ideals of $\KP_K(\Lambda)$.  Here we restrict our attention to Kumjian-Pask algebras over a field $K$.  

\begin{lem}\label{lem: ideal vert} Let  $K$ be a field. Let $W\subset \Lambda^0$ and let $J$ be the ideal generated by $\{p_v: v\in W\}$.   Then  $J$ is a graded ideal of $\KP_K(\Lambda)$, and   $J=I_{\overline{W}}$ and $H_{J}=\overline{W}$.\end{lem}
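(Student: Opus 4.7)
The plan is to show that the set $H := \{v \in \Lambda^0 : p_v \in J\}$ is saturated and hereditary, which will force $\overline{W} \subseteq H$ and hence $I_{\overline{W}} \subseteq J$. The opposite inclusion and the grading are essentially formal.

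First, I observe that each generator $p_v$ is homogeneous of degree $0$ in the $\Z^k$-grading, so $J$ is generated by homogeneous elements and is therefore a graded ideal. Next, since $W \subseteq \overline{W}$, every generator of $J$ lies in $I_{\overline{W}}$, so $J \subseteq I_{\overline{W}}$.

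For the nontrivial inclusion $I_{\overline{W}} \subseteq J$, I show $\overline{W} \subseteq H$ by verifying that $H$ is saturated and hereditary (it plainly contains $W$). For heredity: if $v = r(\lambda) \in H$ and $\lambda \in \Lambda$, then writing $s_\lambda = p_v s_\lambda$ and using (KP3) to get $s_{\lambda^*} s_\lambda = p_{s(\lambda)}$, I find
\[
p_{s(\lambda)} = s_{\lambda^*} s_\lambda = s_{\lambda^*} p_v s_\lambda \in J,
\]
so $s(\lambda) \in H$. For saturation: if $v \in \Lambda^0$, $n \in \N^k$, and $s(v\Lambda^n) \subseteq H$, then for every $\lambda \in v\Lambda^n$ we have $s_\lambda s_{\lambda^*} = s_\lambda p_{s(\lambda)} s_{\lambda^*} \in J$, and applying (KP4) gives
\[
p_v = \sum_{\lambda \in v\Lambda^n} s_\lambda s_{\lambda^*} \in J,
\]
so $v \in H$. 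Therefore $H$ is saturated hereditary, and because $W \subseteq H$ and $\overline{W}$ is the smallest saturated hereditary set containing $W$, we conclude $\overline{W} \subseteq H$. This gives $\{p_v : v \in \overline{W}\} \subseteq J$, hence $I_{\overline{W}} \subseteq J$.

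Combining the two inclusions yields $J = I_{\overline{W}}$, and then $H_J = H_{I_{\overline{W}}} = \overline{W}$ by \cite[Lemma~5.4]{A-PCaHR11}. The only ``obstacle'' is keeping the (KP3)/(KP4) bookkeeping straight, but this is purely routine; the argument is essentially the standard trick of testing saturated-hereditary closure against the set of vertices whose projections lie in a given ideal.
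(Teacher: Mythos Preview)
Your proof is correct and follows essentially the same path as the paper's: both arguments pivot on the observation that $H_J=\{v:p_v\in J\}$ is saturated and hereditary, forcing $\overline{W}\subseteq H_J$. The only difference is packaging: the paper cites \cite[Lemma~5.2]{A-PCaHR11} for the saturated-hereditary claim and then invokes the lattice isomorphism of \cite[Theorem~5.1]{A-PCaHR11} to get $J=I_{H_J}$ and sandwich $J\subset I_{\overline{W}}\subset I_{H_J}=J$, whereas you verify the saturated-hereditary property directly via (KP3)/(KP4) and establish the two inclusions $J\subseteq I_{\overline{W}}$ and $I_{\overline{W}}\subseteq J$ by hand, making your argument slightly more self-contained.
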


\begin{proof}  Suppose $W\subset \Lambda^0$ and $J$ is the ideal generated by $\{p_v: v\in W\}$. We have $W\subset H_J=\{v:p_v\in J\}$, and $H_J$ is saturated hereditary. Thus $\overline{W}\subset H_J$.  Since $\{p_v:v\in W\}$ is a set of homogeneous elements the ideal $J$ generated by it is graded.  By \cite[Theorem~5.1]{A-PCaHR11},  $H\mapsto I_H$ is a lattice isomorphism of the  saturated hereditary subsets of $\Lambda^0$ onto  the graded ideals of $\KP_K(\Lambda)$, and  $J=I_{H_J}$.  Now $J\subset I_{\overline{W}}\subset I_{H_{J}}=J$.  Thus  $J=I_{\overline{W}}$ and $H_{J}=\overline{W}$.
\end{proof}

A ring $R$ is \emph{nondegenerate} if $aRa=\{0\}$ implies that $a=0$.   A ring $R$ is \emph{semiprime}   if whenever $I$ is an ideal  in $R$ such that $I^2=\{0\}$, then $I=\{0\}$.     

\begin{lem}\label{lem: KP nondegenerate} Let  $K$ be a field. Then $\KP_K(\Lambda)$ is nondegenerate and semiprime.  \end{lem}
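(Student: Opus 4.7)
The strategy is to prove nondegeneracy first, from which semiprimeness follows formally. For the semiprime-from-nondegenerate step, if $I$ is a two-sided ideal with $I^2=0$ and $a\in I$, then $a\,\KP_K(\Lambda)\,a \subseteq I\,\KP_K(\Lambda)\,I \subseteq I\cdot I=0$, so nondegeneracy forces $a=0$ and hence $I=0$. Thus the main work is nondegeneracy.

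For nondegeneracy I first handle the homogeneous case. Suppose $a$ is homogeneous of degree $q$ and write it in normal form $a=\sum_{(\alpha,\beta)\in F} r_{\alpha,\beta}\, s_\alpha s_{\beta^*}$ with $d(\beta)=m$. Homogeneity forces $d(\alpha)=m+q$ uniformly over $F$. Applying Lemma~\ref{lem: strengthen}(1) to any $(\mu,\nu)\in F$, the correction sum $\sum_{(\alpha,\nu)\in F,\,d(\alpha)\neq d(\mu)} r_{\alpha,\nu}\, s_{\mu^*}s_\alpha$ is vacuous because every $\alpha$ appearing has $d(\alpha)=d(\mu)$. Hence $b:=s_{\mu^*}as_\nu = r_{\mu,\nu}\,p_{s(\mu)}$. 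Choosing $x=s_\nu s_{\mu^*}$ then yields $s_{\mu^*}(axa)s_\nu = b^2 = r_{\mu,\nu}^2\,p_{s(\mu)}\neq 0$, so $axa\neq 0$.

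For a general $a\neq 0$, decompose $a=\sum_n a_n$ and let $S=\{n:a_n\neq 0\}$, which is a finite nonempty subset of $\Z^k$. Fix a total order $\preceq$ on $\Z^k$ compatible with addition (e.g., lexicographic), and set $m=\max_\preceq S$, so $a_m\neq 0$. For any $x$ homogeneous of degree $p$, the degree-$(2m+p)$ component of $axa$ equals $\sum_{n_1+n_2=2m,\,n_i\in S} a_{n_1}\,x\,a_{n_2}$. The constraints $n_1,n_2\preceq m$ and $n_1+n_2=2m$, together with additivity of $\preceq$, force $n_1=n_2=m$, so this component is exactly $a_m\,x\,a_m$. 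If $a\,\KP_K(\Lambda)\,a=0$ then this component must vanish for every homogeneous $x$, hence for every $x$ by linearity; that is, $a_m\,\KP_K(\Lambda)\,a_m=0$. The homogeneous case then gives $a_m=0$, contradicting $m\in S$.

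The main obstacle is the reduction from general elements to homogeneous ones: Lemma~\ref{lem: strengthen}(1) by itself does not force $b^2\neq 0$ for an arbitrary $a$, because the "$d(\alpha)\neq d(\mu)$" tail can in principle interfere with powers of $b$. The key idea is that homogeneity of $a$ automatically kills that tail, and then the grading together with a total additive order on $\Z^k$ peels off a single surviving component $a_m x a_m$ from $axa$, reducing the problem to the homogeneous situation. Passing from nondegeneracy to semiprimeness is then an immediate ideal-theoretic observation.
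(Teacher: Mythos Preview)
Your proof is correct and follows the same overall strategy as the paper: establish nondegeneracy first (handling homogeneous elements via Lemma~\ref{lem: strengthen}\eqref{lem: strengthen 1}, then reducing the general case through the $\Z^k$-grading), and deduce semiprimeness afterwards. Two minor differences are worth noting. First, the paper obtains semiprimeness by quoting that for rings with local units nondegeneracy and semiprimeness are equivalent, whereas your direct one-line ideal argument avoids that citation. Second, in the reduction step the paper fixes an arbitrary $n\in G$ and asserts that $a_n b a_n$ is \emph{the} homogeneous component of $aba$ in degree $2n+m$; your use of a total additive order on $\Z^k$ and the choice of the $\preceq$-maximal $m$ is what actually guarantees that no cross terms $a_{n_1}ba_{n_2}$ with $n_1\neq n_2$ land in that degree, so your version of this step is the more carefully justified one.
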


\begin{proof} A ring with local units is nondegenerate if and only if it is semiprime  by, for example, \cite[Proposition~6.2.20]{Bland11}. Since $\KP_K(\Lambda)$ has local units by Lemma~\ref{lem local units},  it suffices to show that $\KP_K(\Lambda)$ is nondegenerate.

First suppose that $a\KP_K(\Lambda)a=\{0\}$
where  $a\in \KP_K(\Lambda)_n$ is homogeneous of degree $n\in \Z^k$.  By way of contradiction, suppose $a\neq 0$ and write $a=\sum_{(\alpha,\beta)\in F} r_{(\alpha,\beta)}s_{\alpha}s_{\beta^*}$ in normal form.  Since $a\in \KP_K(\Lambda)_n$, all of the summands of $a$ are in $\KP_K(\Lambda)_n$ as well. Since the $\beta$ all have the same degree, so do the $\alpha$.  Fix $(\mu,\nu)\in F$. Then $s_{\mu^*} as_{\nu}$ is homogeneous, and it follows from Lemma~\ref{lem: strengthen}\eqref{lem: strengthen 1} that  $s_{\mu^*} as_{\nu}=r_{\mu,\nu}p_{s(\mu)}$. So $r_{\mu,\nu}p_{s(\mu)}\KP_K(\Lambda)r_{\mu,\nu}p_{s(\mu)}=s_{\mu^*} as_{\nu} \KP_K(\Lambda)s_{\mu^*} as_{\nu}\subset s_{\mu^*} \left(a \KP_K(\Lambda) a\right)s_{\nu}=\{0\}$.  But now 
\[
r_{\mu,\nu}^2p_{s(\mu)} =(r_{\mu,\nu}p_{s(\mu)})p_{s(\mu)}(r_{\mu,\nu}p_{s(\mu)})\in r_{\mu,\nu}p_{s(\mu)}\KP_K(\Lambda)r_{\mu,\nu}p_{s(\mu)}=\{0\}
\] 
implies that $r_{\mu,\nu}^2=0$. Since $K$ is a field, $r_{\mu,\nu}=0$.  But $(\mu,\nu)\in F$ was arbitrary, so $a=0$, a contradiction. Thus $a\KP_K(\Lambda)a=\{0\}$ and $a$ homogeneous implies that $a=0$.

Next, suppose $a\KP_K(\Lambda)a=\{0\}$ for some $a\in\KP_K(\Lambda)$. By way of contradiction again, suppose $a\neq 0$.  Decompose $a$ into nonzero homogeneous components  $a=\sum_{n\in G}a_n$ with $a_n\in \KP_K(\Lambda)_n$ for some finite subset $G$ of $\Z^k$.  If $b\in \KP_K(\Lambda)_m$ for some $m$, then $aba\in a\KP_K(\Lambda)a=\{0\}$. Let $n, n'\in G$ such that $n\neq n'$.  Then the degrees of  $a_n b a_n$ and $a_{n'} b a_{n'}$ are different.  Therefore $a_n b a_n$ is the unique homogeneous component of $aba$ of degree $2d(a_n)+m$, and hence must be zero.  Thus $a_n ba_n=0$ for all homogeneous elements $b\in \KP_K(\Lambda)$. Since $\KP_K(\Lambda)$ is generated by homogeneous elements we have $a_n\KP_K(\Lambda)a_n=\{0\}$.  Therefore $a_n=0$ by the previous paragraph,   a contradiction.  Hence $\KP_K(\Lambda)$ is nondegenerate.  \end{proof}

\section{Minimal left-ideals  and the socle of a Kumjian-Pask algebra}\label{sec: min left ideal}

Throughout this section $\Lambda$ is a row-finite $k$-graph with no sources and $K$ is a field. 
In analogy with the analysis of the minimal left-ideals of the Leavitt path algebras in \cite{A-PM-BM-GS-M08},  we show that  minimal left-ideals of the Kumjian-Pask algebra are  isomorphic to $\KP_K(\Lambda)p_v$ where $v$ is a vertex called a ``line point'' in Definition~\ref{defn: line pt} below.  We then identify the socle of $\KP_K(\Lambda)$ as the graded ideal generated by the vertex idempotents $p_v$ where $v$ is in the saturated hereditary  closure of the line points. 

\begin{lem}\label{lem: min in corner}Let $a\in \KP_K(\Lambda)$, and suppose that $\KP_K(\Lambda)a$ is a minimal left-ideal.  Then there exist a vertex $v\in \Lambda^0$ and $b\in p_v \KP_K(\Lambda)p_v$ such that $\KP_K(\Lambda)a$ and $\KP_K(\Lambda)b$ are isomorphic as left modules.\end{lem}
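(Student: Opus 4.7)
The plan is to read off a candidate $b$ directly from a normal form of $a$ and then transfer the isomorphism along right multiplication by one of the path generators. Write $a = \sum_{(\alpha,\beta)\in F} r_{\alpha,\beta} s_\alpha s_{\beta^*}$ in normal form, fix any $(\mu,\nu)\in F$, and set $v := s(\mu) = s(\nu)$ (equal because $(\mu,\nu)\in F$) and
\[
b := s_{\mu^*}\, a\, s_\nu.
\]
Lemma~\ref{lem: strengthen}\eqref{lem: strengthen 1} gives $b\neq 0$ directly. The relations $p_v s_{\mu^*} = s_{\mu^*}$ and $s_\nu p_v = s_\nu$ from (KP2) yield $p_v b p_v = b$, so $b\in p_v\KP_K(\Lambda) p_v$, which takes care of the first assertion.

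For the isomorphism $\KP_K(\Lambda) a\cong \KP_K(\Lambda) b$, I would work via right multiplication by $s_\nu$. Consider the left $\KP_K(\Lambda)$-module homomorphism $\phi:\KP_K(\Lambda) a\to \KP_K(\Lambda)$ defined by $\phi(x)=xs_\nu$. Its image is $\KP_K(\Lambda) a\, s_\nu$, and since $b\neq 0$ forces $a s_\nu\neq 0$, $\phi$ is nonzero. The kernel of $\phi$ is a submodule of the minimal left ideal $\KP_K(\Lambda) a$, hence zero, so $\phi$ restricts to an isomorphism $\KP_K(\Lambda) a\cong \KP_K(\Lambda) a s_\nu$. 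Writing any element of $\KP_K(\Lambda) b$ as $r s_{\mu^*} a s_\nu = (r s_{\mu^*} a)\, s_\nu$ shows $\KP_K(\Lambda) b\subseteq \KP_K(\Lambda) a s_\nu$, and since $\KP_K(\Lambda) b$ is a nonzero submodule of the simple module $\KP_K(\Lambda) a s_\nu$, the two must coincide. Composing, $\KP_K(\Lambda) b\cong \KP_K(\Lambda) a$.

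The only conceptual step is choosing $b = s_{\mu^*} a s_\nu$; once that is done, nonvanishing is Lemma~\ref{lem: strengthen}\eqref{lem: strengthen 1}, the corner condition is immediate from (KP2), and the module isomorphism is forced by minimality of $\KP_K(\Lambda) a$. I do not anticipate a substantive obstacle.
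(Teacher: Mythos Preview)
Your proof is correct and is essentially the paper's own argument: the same choice $b=s_{\mu^*}as_\nu$ with $v=s(\mu)=s(\nu)$, the same appeal to Lemma~\ref{lem: strengthen}\eqref{lem: strengthen 1} for nonvanishing, and the same use of minimality to force injectivity of right multiplication by $s_\nu$. The only cosmetic difference is that the paper first identifies $\KP_K(\Lambda)a=\KP_K(\Lambda)s_{\mu^*}a$ and then maps onto $\KP_K(\Lambda)b$, whereas you map $\KP_K(\Lambda)a$ isomorphically onto $\KP_K(\Lambda)as_\nu$ and then identify the latter with $\KP_K(\Lambda)b$; these are the same map packaged in a different order.
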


\begin{proof}Suppose that $\KP_K(\Lambda)a$ is a minimal left-ideal. Then $a\neq 0$ and we write $a=\sum_{(\alpha,\beta)\in F} r_{\alpha,\beta}s_{\alpha} s_{\beta^*}$ in normal form.  Pick $(\mu,\nu)\in F$ and let $v=s(\mu)=s(\nu)$. Then, by Lemma~\ref{lem: strengthen}\eqref{lem: strengthen 1},  $0\neq s_{\mu^*}as_\nu\in p_v\KP_K(\Lambda)p_v$.  Since $\KP_K(\Lambda)a$ is minimal and $s_{\mu^*}a\neq 0$, $\KP_K(\Lambda)a =\KP_K(\Lambda)s_{\mu^*}a$.   Let $b=s_{\mu^*}as_\nu$, then $cs_{\mu^*}a\mapsto cb$  defines a left-module homomorphism, $\Phi:\KP_K(\Lambda)a\to  \KP_K(\Lambda)b$. Indeed, if $cs_{\mu^*}a=ds_{\mu^*}a$ then $(c-d)s_{\mu^*}a=0$ and hence $cb-db=(c-d) s_{\mu^*}as_\nu =0$, so $\Phi$ is well-defined. Now $\Phi$ is a surjective module homomorphism, and it is injective since the kernel of $\Phi$ is a left ideal in the minimal left-ideal $\KP_K(\Lambda)a$.  \end{proof}

By Lemma~\ref{lem: min in corner} we may  assume that minimal left-ideals are of the form $\KP_K(\Lambda)b$ for some $b\in p_v\KP_K(\Lambda)p_v$ and $v\in\Lambda^0$; next we analyze   the properties of  such $v$.

\begin{defn}\label{defn: line pt}Suppose that $\Lambda$ is a $k$-graph.  A vertex $v\in \Lambda^0$ is  a \emph{line point} if $v\Lambda^\infty=\{x\}$ and $x$ is aperiodic. We denote the set of line points of $\Lambda$ by $P_l(\Lambda)$.\end{defn}

The following observations are needed below. In particular, they help in Remark~\ref{reconcile} where we reconcile Definition~\ref{defn: line pt} with the line points of directed graphs as defined in \cite[Definition~2.1]{A-PM-BM-GS-M08}.

\begin{rmk}\label{many remarks} As pointed out in \cite[Remark~2.2]{KumPas00}, the values of $x(0,m)$ for $m\in\N^k$ completely determine $x\in \Lambda^\infty$. Thus: 
\begin{enumerate} 
\item\label{many remarks one}  $|v\Lambda^\infty|=1$ if and only if $|v\Lambda^m|=1$ for all $m\in \N^k$. 
\item\label{many remarks two}  Suppose that  $v\Lambda^\infty=\{x\}$.  Then $x(m)\Lambda^\infty=\{\sigma^m(x)\}$ for all $m\in \N^k$. Thus, if $x(p)=x(q)$ for some $p\neq q\in \N^k$ then $\sigma^p(x)=\sigma^q(x)$, that is, $x$ is periodic. 
\item\label{many remarks three} The set $P_l(\Lambda)$ is hereditary. To see this, let $\mu\in v\Lambda$ for some line point $v$.  Then $v\Lambda^\infty=\{x\}$ where $x$ is aperiodic.  By \eqref{many remarks two}, $s(\mu)\Lambda^\infty=\{\sigma^{d(\mu)}(x)\}$, and $\sigma^{d(\mu)}(x)$ is aperiodic because $x$ is.  Thus $s(\mu)$ is a line point, and $P_l(\Lambda)$ is hereditary. 
\item\label{many remarks four} If $|v\Lambda^\infty|=1$ and $\alpha\in v\Lambda$, then from \eqref{many remarks one}, $v\Lambda^{d(\alpha)}=\{\alpha\}$.  So (KP4) implies that \[p_v=\sum_{\lambda\in v\Lambda^{d(\alpha)}} s_{\lambda}s_{\lambda^*}=s_{\alpha}s_{\alpha^*}.\]
\end{enumerate}
\end{rmk}

\begin{rmk}\label{reconcile}
If $\Lambda$ is a $1$-graph then Definition~\ref{defn: line pt} reduces to \cite[Definition~2.1]{A-PM-BM-GS-M08}. To see this, let $T(v)$ be the smallest hereditary subset of $\Lambda^0$ containing $v\in\Lambda^0$.  Notice that $|v\Lambda^\infty|>1$ if and only if there exists $u\in T(v)$ with $|u\Lambda^m|>1$ for some $m\in\N$.  Now let $v$ be a line point, that is,  $v\Lambda^\infty=\{x\}$ where $x$ is aperiodic. Since  $|v\Lambda^\infty| = 1$  there can be no ``bifurcations'' at any $u\in T(v)$, and since $x$ is aperiodic  there can be no cycles  at any $u\in T(v)$. Thus $v$ is a line point in the sense of \cite[Definition~2.1]{A-PM-BM-GS-M08}.
\end{rmk}

Let $a\in p_v\KP_K(\Lambda) p_v$ and suppose that $\KP_K(\Lambda)a$ is a minimal left-ideal.  Our next goal is to show that  $v$ is a line point, and we do this in two steps: the next proposition shows that $v\Lambda^\infty$ is a singleton $\{y\}$ and Proposition~\ref{prop: min non per 2} shows that $y$ must be aperiodic. Our analysis of the minimal left-ideals culminates in Theorem~\ref{thm: min come from line}.

\begin{prop}\label{new prop pv min vL=1} Let $v\in \Lambda^0$ and $a\in p_v\KP_K(\Lambda)p_v$.
\begin{enumerate}
\item\label{new prop pv min vL=1one} Let $m\in \N^k$. Then 
\[\Psi:\KP_K(\Lambda)a\to\bigoplus_{\lambda\in v\Lambda^m} \KP_K(\Lambda)s_{\lambda}s_{\lambda^*}a, \quad\Psi(ca)=(cs_{\lambda}s_{\lambda^*}a)_{\lambda\in v\Lambda^m}\] is an isomorphism of left-modules.
\item\label{new prop pv min vL=1two} 
 If $\KP_K(\Lambda)a$ is a minimal left-ideal then $|v\Lambda^\infty|=1$.
 \end{enumerate}
\end{prop}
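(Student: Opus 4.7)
For part (1), I would invoke (KP4) and (KP3): the idempotents $\{s_\lambda s_{\lambda^*}:\lambda\in v\Lambda^m\}$ are mutually orthogonal and sum to $p_v$. Because $a\in p_v\KP_K(\Lambda)p_v$ gives $a=p_va=\sum_{\lambda\in v\Lambda^m}s_\lambda s_{\lambda^*}a$, we obtain the identity $ca=\sum_\lambda cs_\lambda s_{\lambda^*}a$, so the formula for $\Psi$ records the orthogonal decomposition of $ca$. I would then introduce the summation map $\Phi\colon(x_\lambda)_\lambda\mapsto\sum_\lambda x_\lambda$ as the candidate inverse. The equality $\Phi\circ\Psi=\mathrm{id}$ is immediate. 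For $\Psi\circ\Phi=\mathrm{id}$, given $x_\lambda=y_\lambda s_\lambda s_{\lambda^*}a$ set $c=\sum_\lambda y_\lambda s_\lambda s_{\lambda^*}$; orthogonality forces $cs_\mu s_{\mu^*}=y_\mu s_\mu s_{\mu^*}$ for every $\mu$, whence $\Psi\bigl(\sum_\lambda x_\lambda\bigr)=(x_\mu)_\mu$. Left-$\KP_K(\Lambda)$-linearity of both maps is routine.

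For part (2), assume $\KP_K(\Lambda)a$ is minimal, hence simple as a left module. Applying (1) for an arbitrary $m\in\N^k$ gives $\KP_K(\Lambda)a\cong\bigoplus_{\lambda\in v\Lambda^m}\KP_K(\Lambda)s_\lambda s_{\lambda^*}a$. Because a simple module is indecomposable and $a=\sum_\lambda s_\lambda s_{\lambda^*}a\neq 0$, there is exactly one $\lambda_m\in v\Lambda^m$ with $s_{\lambda_m}s_{\lambda_m^*}a\neq 0$; every other $\lambda\in v\Lambda^m$ satisfies $s_{\lambda^*}a=0$. I would then show that such extra $\lambda$ cannot exist under the minimality hypothesis by combining the annihilation $s_{\lambda^*}a=0$ with Lemma~\ref{lem: strengthen}\eqref{lem: strengthen 1} applied to a normal form of $a$, forcing $|v\Lambda^m|=1$. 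Since this holds for every $m$, Remark~\ref{many remarks}\eqref{many remarks one} delivers $|v\Lambda^\infty|=1$.

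The main obstacle I anticipate lies in part (1): establishing that the internal sum $\sum_\lambda \KP_K(\Lambda)s_\lambda s_{\lambda^*}a$ is direct. The analogous decomposition $\KP_K(\Lambda)p_v=\bigoplus_\lambda \KP_K(\Lambda)s_\lambda s_{\lambda^*}$ is easy, because right-multiplying a vanishing combination $\sum y_\lambda s_\lambda s_{\lambda^*}=0$ by $s_\mu$ yields $y_\mu s_\mu=0$, and hence $y_\mu s_\mu s_{\mu^*}=0$; however, transporting that direct-sum property through right-multiplication by $a$ is more delicate, since $a$ need not commute with the $s_\lambda s_{\lambda^*}$, and the argument should hinge on $a=p_vap_v$ together with (KP3). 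In part (2) the finer point is converting the indecomposability conclusion into the sharp equality $|v\Lambda^m|=1$, as the direct sum is indexed over all of $v\Lambda^m$ and a priori could have zero summands that do not correspond to any geometric contradiction without further input.
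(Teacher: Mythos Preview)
Your plan for part~(1) is exactly the paper's argument: it too introduces the summation map $\Phi$ as the candidate inverse and checks $\Phi\circ\Psi=\mathrm{id}$ via (KP4) and $\Psi\circ\Phi=\mathrm{id}$ via (KP3), simply asserting at the outset that ``$\Psi$ is a left-module homomorphism'' and not addressing well-definedness any further. So the obstacle you anticipate---whether $ca=c'a$ forces $cs_\lambda s_{\lambda^*}a=c's_\lambda s_{\lambda^*}a$, equivalently whether the internal sum is direct---is not something the paper resolves; it does not raise the point at all.

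For part~(2) the paper is terser than your plan: from $|v\Lambda^m|>1$ it concludes that the direct sum in (1) ``has at least two summands, and hence $\KP_K(\Lambda)a$ is not minimal,'' without discussing the possibility that some summands $\KP_K(\Lambda)s_\lambda s_{\lambda^*}a$ vanish. Your proposal to rule out $s_{\lambda^*}a=0$ via Lemma~\ref{lem: strengthen}\eqref{lem: strengthen 1} and a normal form of $a$ therefore goes beyond what the paper supplies, though you have not written out how that step would actually force $|v\Lambda^m|=1$.
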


\begin{proof} \eqref{new prop pv min vL=1one}
Since $\Psi$ is a left-module homomorphism  it suffices to write down an inverse for it. Define $\Phi:(c_\lambda s_{\lambda}s_{\lambda^*}a)_{\lambda\in v\Lambda^m}\mapsto \sum_{_{\lambda\in v\Lambda^m}}c_\lambda s_{\lambda}s_{\lambda^*}a$.  Then $\Phi\circ\Psi(ca)=\Phi((cs_{\lambda}s_{\lambda^*}a))=\sum_{\lambda\in v\Lambda^m} cs_{\lambda}s_{\lambda^*}a=cp_va=ca$ using (KP4).  Also, 
\begin{align*}
\Psi\circ\Phi((c_\lambda s_{\lambda}s_{\lambda^*}a)_{\lambda\in v\Lambda^m})
&=\Psi\Big( \sum_{\lambda\in v\Lambda^m}c_\lambda s_\lambda s_{\lambda^*}a\Big)
=\Psi\Big( \Big(\sum_{\lambda\in v\Lambda^m}c_\lambda s_\lambda s_{\lambda^*}\Big) a\Big)\\
&=\Big(\sum_{\lambda\in v\Lambda^m}c_\lambda s_\lambda s_{\lambda^*}s_\mu s_{\mu^*}a  \Big)_{\mu\in v\Lambda^m}=(c_\lambda s_{\lambda}s_{\lambda^*}a)_{\lambda\in v\Lambda^m}
\end{align*}
using (KP3).  Thus $\Phi$ is an inverse for $\Psi$, and $\Psi$ is an isomorphism.

\eqref{new prop pv min vL=1two} Suppose that  $|v\Lambda^\infty|>1$. Then there exists $m\in\N^k$ such that  $|v\Lambda^m|>1$ (see Remark~\ref{many remarks}\eqref{many remarks one}).  Then $\KP_K(\Lambda)a$ and $\bigoplus_{\lambda \in v\Lambda^m} \KP_K(\Lambda)s_{\lambda}s_{\lambda^*}a$ are isomorphic by \eqref{new prop pv min vL=1one}.  But the direct sum has at least two summands, and hence $\KP_K(\Lambda)a$ is not minimal.
\end{proof}

The next lemma is needed again in \S\ref{sec: ss}.

\begin{lem}\label{lem: nonzero}Let $v\in \Lambda^0$ such that $|v\Lambda^\infty|=1$ and $N\in \N$. Suppose   $\mu_i,\nu_i\in v\Lambda$ such that $s(\mu_i)=s(\nu_i)$ for $i=1,\ldots ,N$.  Then $s_{\mu_1}s_{\nu_1^*}\cdots s_{\mu_N}s_{\nu_N^*}\neq 0$.\end{lem}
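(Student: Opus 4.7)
The plan is to induct on $N$, using that $|v\Lambda^\infty|=1$ forces $|v\Lambda^m|=1$ for every $m\in\N^k$ by Remark~\ref{many remarks}\eqref{many remarks one}, and hence $p_v=s_\alpha s_{\alpha^*}$ for every $\alpha\in v\Lambda$ by Remark~\ref{many remarks}\eqref{many remarks four}. The base $N=1$ is immediate: $s_{\mu_1}s_{\nu_1^*}\neq 0$ since $s(\mu_1)=s(\nu_1)$, as noted in the paragraph containing \eqref{normalform}.

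The key computational lemma I would establish first is the following: for any $\alpha,\beta\in v\Lambda$, let $\gamma$ be the unique element of $v\Lambda^{d(\alpha)\vee d(\beta)}$ (unique since $|v\Lambda^{d(\alpha)\vee d(\beta)}|=1$). The factorization property writes $\gamma=\alpha\alpha'=\beta\beta'$ for unique $\alpha',\beta'$, and inserting $p_v=s_\gamma s_{\gamma^*}$ gives
\[
s_{\alpha^*}s_\beta=s_{\alpha^*}(s_\gamma s_{\gamma^*})s_\beta=s_{\alpha^*}s_\alpha s_{\alpha'}s_{\beta'^*}s_{\beta^*}s_\beta=p_{s(\alpha)}s_{\alpha'}s_{\beta'^*}p_{s(\beta)}=s_{\alpha'}s_{\beta'^*},
\]
using (KP2) and (KP3).

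For the inductive step, apply this identity with $\alpha=\nu_1$, $\beta=\mu_2$, and write $\nu_1\nu_1'=\mu_2\mu_2'$. Using (KP2) to absorb the outer factors,
\[
s_{\mu_1}s_{\nu_1^*}s_{\mu_2}s_{\nu_2^*}\cdots s_{\mu_N}s_{\nu_N^*}=s_{\mu_1\nu_1'}s_{(\nu_2\mu_2')^*}s_{\mu_3}s_{\nu_3^*}\cdots s_{\mu_N}s_{\nu_N^*}.
\]
The concatenations $\mu_1\nu_1'$ and $\nu_2\mu_2'$ are valid paths (since $s(\mu_1)=s(\nu_1)=r(\nu_1')$ and $s(\nu_2)=s(\mu_2)=r(\mu_2')$), both lie in $v\Lambda$, and they share the source $s(\nu_1')=s(\gamma)=s(\mu_2')$. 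The right-hand side is therefore a product of $N-1$ terms of exactly the hypothesized form, so the inductive hypothesis yields nonvanishing.

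The main obstacle is keeping the bookkeeping honest: one must verify at each step that the new ``collapsed'' paths still start at $v$ and still share a common source, so that the inductive hypothesis genuinely applies. This is where the hypothesis $s(\mu_i)=s(\nu_i)$ and the uniqueness $|v\Lambda^m|=1$ work in tandem, the latter ensuring the extensions $\nu_1'$ and $\mu_2'$ of $\nu_1,\mu_2$ to a common-degree path exist and land at the same vertex.
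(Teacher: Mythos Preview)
Your proof is correct and follows essentially the same inductive strategy as the paper: both collapse the first two factors $s_{\mu_1}s_{\nu_1^*}s_{\mu_2}s_{\nu_2^*}$ into a single $s_{\mu_1\nu_1'}s_{(\nu_2\mu_2')^*}$ by extending $\nu_1$ and $\mu_2$ to the unique path in $v\Lambda^{d(\nu_1)\vee d(\mu_2)}$, then invoke the inductive hypothesis. The only cosmetic difference is that you insert $p_v=s_\gamma s_{\gamma^*}$ between $s_{\nu_1^*}$ and $s_{\mu_2}$, whereas the paper inserts $p_{s(\nu_1)}$ and $p_{s(\mu_2)}$ on either side; the resulting computation is the same.
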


\begin{proof}We proceed by induction on $N$.  Since $s(\mu_1)=s(\nu_1)$, $0\neq p_{s(\mu_1)}=s_{\mu_1^*}s_{\mu_1}s_{\nu_1^*}s_{\nu_1}$.  In particular $s_{\mu_1}s_{\nu_1^*}\neq 0$, and the lemma holds for $N=1$. 

Let $N\geq 2$, and suppose  every set $\{\alpha_i,\beta_i\in v\Lambda: s(\alpha_i)=s(\beta_i), i=1,\ldots N-1\}$ has the property that $s_{\alpha_1}s_{\beta_1^*}\cdots s_{\alpha_{N-1}}s_{\beta_{N-1}^*}\neq 0$. 
Pick
$\{\mu_i,\nu_i\in v\Lambda: s(\mu_i)=s(\nu_i), i=1,\ldots, N\}$ and consider $s_{\mu_1}s_{\nu_1^*}\cdots s_{\mu_N}s_{\nu_N^*}$.
Since  $r(\nu_1)=r(\mu_2)=v$  there exist $\gamma,\eta\in \Lambda$ such that $\nu_1\gamma=\mu_2\eta\in v\Lambda^{d(\nu_1)\vee d(\mu_2)}$. Since $|v\Lambda^\infty|=1$ we have $s(\nu_1)\Lambda^{d(\gamma)}=\{\gamma\}$ and $s(\mu_2)\Lambda^{d(\eta)}=\{\eta\}$ (see Remark~\ref{many remarks}\eqref{many remarks one}).  Thus
\begin{align}
 s_{\mu_1}s_{\nu_1^*}&s_{\mu_2}s_{\nu_2^*}\cdots s_{\mu_N}s_{\nu_N^*}=s_{\mu_1}p_{s(\nu_1)}s_{\nu_1^*}s_{\mu_2}p_{s(\mu_2)}s_{\nu_2^*}\cdots s_{\mu_N}s_{\nu_N^*}\quad\text{(using (KP2))}\notag\\
&=s_{\mu_1}(s_{\gamma}s_{\gamma^*})s_{\nu_1^*}s_{\mu_2}(s_{\eta}s_{\eta^*})s_{\nu_2^*}\cdots s_{\mu_N}s_{\nu_N^*}\quad\text{(using (KP4) and Remark~\ref{many remarks}\eqref{many remarks four})}\notag\\
&=s_{\mu_1\gamma}(s_{(\nu_1\gamma)^*}s_{\mu_2\eta})s_{(\nu_2\eta)^*}\cdots s_{\mu_N}s_{\nu_N^*}\notag\\
&=s_{\mu_1\gamma}s_{(\nu_2\eta)^*}\cdots s_{\mu_N}s_{\nu_N^*} \quad\text{(by (KP3) since $\nu_1\gamma=\mu_2\eta$.)}\label{nonzerobyIH}
\end{align} 
But \eqref{nonzerobyIH}  is nonzero by our inductive hypothesis, and the lemma follows.\end{proof}

\begin{prop}\label{prop: min non per 2}Let $v\in \Lambda^0$ such that $v\Lambda^\infty=\{y\}$ and let $a\in p_v\KP_K(\Lambda)p_v$.  If $\KP_K(\Lambda)a$ is a minimal left-ideal then $y$ is aperiodic.\end{prop}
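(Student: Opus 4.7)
The plan is to suppose $y$ is periodic and reach a contradiction by showing the corner algebra $p_v\KP_K(\Lambda) p_v$ is then a commutative integral domain too rigid to admit a minimal left ideal of the form $\KP_K(\Lambda) a$. Fix $p\neq q\in \N^k$ with $\sigma^p(y)=\sigma^q(y)$ and set $\mu:=y(0,p)$, $\nu:=y(0,q)$. Since $|v\Lambda^\infty|=1$, Remark~\ref{many remarks}\eqref{many remarks four} gives $s_\mu s_{\mu^*}=p_v=s_\nu s_{\nu^*}$, and periodicity forces $s(\mu)=y(p)=y(q)=s(\nu)$. Hence $b:=s_\mu s_{\nu^*}$ and $c:=s_\nu s_{\mu^*}$ lie in $p_v\KP_K(\Lambda) p_v$ with $bc=p_v=cb$, making $b$ a homogeneous unit of nonzero degree $p-q$.

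The main step, which I expect to be the chief obstacle, is a structure theorem for $p_v\KP_K(\Lambda) p_v$: each $\Z^k$-homogeneous component is at most one-dimensional, and the set $G\subseteq \Z^k$ of degrees in which it is nonzero forms a subgroup, giving an isomorphism $p_v\KP_K(\Lambda) p_v\cong K[G]$ of $K$-algebras. This is the ``comparison of degrees'' promised in the introduction: given any spanning element $s_\alpha s_{\beta^*}$ with $s(\alpha)=s(\beta)$, I absorb the unique forward extensions at $s(\alpha)$ via Remark~\ref{many remarks}\eqref{many remarks four} and rewrite it as $s_{y(0,M)}s_{y(0,M-n)^*}$ for every sufficiently large $M\in\N^k$, where $n:=d(\alpha)-d(\beta)$; a direct computation then shows that these canonical generators obey the group law. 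Since $G$ is a torsion-free abelian subgroup of $\Z^k$, the group algebra $K[G]$ is a commutative integral domain.

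With this in hand the endgame is standard. By Lemma~\ref{lem: KP nondegenerate} the algebra $\KP_K(\Lambda)$ is semiprime, so $L:=\KP_K(\Lambda) a$ is not square-zero and Brauer's lemma supplies an idempotent $f\in L$ with $L=\KP_K(\Lambda) f$ and $f\KP_K(\Lambda) f$ a division ring (Schur). As in the proof of Lemma~\ref{lem: min in corner}, I may replace $f$ by $p_v f$ and assume $f\in p_v\KP_K(\Lambda) p_v$. But the only idempotents of the domain $p_v\KP_K(\Lambda) p_v\cong K[G]$ are $0$ and $p_v$, so $f=p_v$; hence $L=\KP_K(\Lambda) p_v$ and $p_v\KP_K(\Lambda) p_v$ is itself a division ring. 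Since $G\ni p-q$ is nontrivial, however, $K[G]$ is a nontrivial Laurent polynomial ring and hence not a division ring, a contradiction.
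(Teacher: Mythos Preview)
Your proof is correct, but takes a genuinely different route from the paper's.

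The paper argues directly: assuming $y$ periodic with $\sigma^m(y)=\sigma^n(y)$, it shows that $a\notin\KP_K(\Lambda)(s_\mu s_{\nu^*}a+a)$ by a careful comparison of the $l$-th coordinate of degrees (where $\pi_l(m-n)>0$) on both sides of a hypothetical equation $b(s_\mu s_{\nu^*}a+a)=a$; the maximal $\pi_l$-degree forces $\pi_l(d(\gamma_i\tau^*))<0$ for all summands of $b$, which then contradicts the minimal $\pi_l$-degree. This is concrete and self-contained, relying only on Lemma~\ref{lem: nonzero} to guarantee that the extremal terms are nonzero.

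Your approach is more structural. You prove that $p_v\KP_K(\Lambda)p_v\cong K[G]$ for a subgroup $G\leq\Z^k$ (each graded piece is one-dimensional and the canonical generators multiply by the group law), and then finish with standard ring theory: Brauer's lemma produces an idempotent $f$ generating the minimal ideal; since $a(p_vf)=af=a\neq 0$ one has $p_vf\neq 0$, and as $(p_vf)^2=p_vf$ lies in the domain $K[G]$ it must equal $p_v$, forcing $\KP_K(\Lambda)a=\KP_K(\Lambda)p_v$; but then $p_v\KP_K(\Lambda)p_v$ would be a division ring, impossible for a Laurent polynomial ring $K[\Z^r]$ with $r\geq 1$. (Your reference to ``as in Lemma~\ref{lem: min in corner}'' is slightly misleading---that lemma produces an \emph{isomorphic} ideal, whereas here you need the equality $\KP_K(\Lambda)f=\KP_K(\Lambda)(p_vf)$, which follows from minimality once $p_vf\neq 0$.)

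What each buys: the paper's argument needs no external input beyond the grading and Lemma~\ref{lem: nonzero}. Your structure theorem $p_v\KP_K(\Lambda)p_v\cong K[G]$ is of independent interest---it immediately yields Lemma~\ref{lem: line gives K} as the special case $G=0$, and makes the obstruction to minimality transparent (Laurent polynomials are not a field)---at the cost of invoking Brauer's lemma and the zero-divisor theorem for torsion-free abelian group rings.
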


\begin{proof} Suppose that $y$ is periodic.  Then there exists $n\neq m\in \N^k$ with $\sigma^m(y)=\sigma^n(y)$: in particular, $y(m)=y(n)$.  Let $\pi_h:\Z^k\to \Z$  be the projection onto the $h$-th factor.   By possibly  interchanging the roles of $m$ and $n$ we can assume that $\pi_l(m-n)>0$ for some $l\in\{1,\ldots,k\}$. Let $\mu:=y(0,m)$ and $\nu:=y(0,n)$. Since $s(\mu)=s(\nu)$, $s_\mu s_{\nu^*}\neq 0$. Consider $s_\mu s_{\nu^*}a+a\in \KP_K(\Lambda)a$.  We have $\{0\}\neq\KP_K(\Lambda)(s_\mu s_{\nu^*}a+a)\subset \KP_K(\Lambda)a$.  We will show that this inclusion is proper by showing that $a\notin \KP_K(\Lambda)(s_\mu s_{\nu^*}a+a)$.

Let $a=\sum_{(\alpha,\beta)\in F}r_{\alpha,\beta}s_\alpha s_{\beta^*}$ be in normal form.  Then for all $(\alpha,\beta),(\mu,\nu)\in F$  we have $d(\beta)=d(\nu)$, and, since $|v\Lambda^\infty|=1$, we have $\beta=\nu$.  So we can assume that 
\[a=\sum_{j=1}^N r_j s_{\alpha_j}s_{\beta^*}\quad r_j\in K\setminus \{0\}\]
for some fixed $\beta$.  We may  assume that the $\alpha_j$ are ordered so that $\pi_l(d(\alpha_j))\leq \pi_l(d(\alpha_{j'}))$ for all $1\leq j\leq j'\leq N$.

By way of contradiction, suppose that $a\in \KP_K(\Lambda)(s_\mu s_{\nu^*}a+a)$, that is, suppose that there exists $b\in \KP_K(\Lambda)$ such that
\begin{equation}\label{an element =a}b(s_\mu s_{\nu^*}a+a)=a.\end{equation} Then $p_vbp_v(s_\mu s_{\nu^*}a+a)=p_va=a$ so we may replace $b$ by $p_vbp_v$, that is, we may assume $b\in p_v\KP_K(\Lambda)p_v$.  As above there is a $\tau\in \Lambda$ such that  $b$ has normal form 
\[b=\sum_{i=1}^M t_{i} s_{\gamma_i} s_{\tau^*}\quad\text{with $\pi_l(d(\gamma_i))\leq \pi_l(d(\gamma_{i'}))$ for $1\leq i\leq i'\leq M$.}
\]   

Our strategy is to compare degrees of the  terms on the left and right side of equation~\eqref{an element =a}; in particular we will compare the terms whose degree has  maximal and minimal $l$-th component.  The terms of $b(s_\mu s_{\nu^*}a+a)$ are either of the form $s_{\gamma_i} s_{\tau^*}s_{\alpha_j}s_{\beta^*}$ (the summands of $ba$) or  $s_{\gamma_i} s_{\tau^*}s_{\mu}s_{\nu^*}s_{\alpha_j}s_{\beta^*}$ (the summands of $bs_\mu s_{\nu^*}a$).  Note that both $s_{\gamma_i} s_{\tau^*}s_{\alpha_j}s_{\beta^*}$ and $s_{\gamma_i} s_{\tau^*}s_{\mu}s_{\nu^*}s_{\alpha_j}s_{\beta^*}$  are nonzero by Lemma~\ref{lem: nonzero}. 
 
For a homogeneous element $c\in \KP_K(\Lambda)_n$ of degree $n$ we will call $\pi_l(d(c))$ the $\pi_l$-degree of $c$.  
Since $d(\mu)=m, d(\nu)=n$ and $\pi_l(m-n)>0$, the terms of $b(s_\mu s_{\nu^*}a+a)$ who have  maximal $\pi_l$-degree come from $bs_\mu s_{\nu^*}a$. Consider the term of maximal $\pi_l$-degree, $r_N t_M s_{\gamma_M}s_{\tau^*}s_{\mu}s_{\nu^*}s_{\alpha_N}s_{\beta^*}$.  Then
$$\pi_l(d(\gamma_M\tau^*\mu\nu^*\alpha_N\beta^*))=\pi_l(d(\gamma_M\tau^*))+\pi_l(m-n)+\pi_l(d(\alpha_N\beta^*)).$$
By assumption on the ordering of $\sum r_i s_{\alpha_i}s_{\beta^*}$, the maximal $\pi_l$-degree of terms on the right-hand side of equation~\eqref{an element =a} is $\pi_l(d(\alpha_N\beta^*))$.  So for \eqref{an element =a} to hold we must have $\pi_l(d(\gamma_M\tau^*))+\pi_l(m-n)+\pi_l(d(\alpha_N\beta^*))= \pi_l(d(\alpha_N\beta^*))$. Hence \[\pi_l(d(\gamma_M\tau^*))=-\pi_l(m-n)<0.\]  But  $\pi_l(d(\gamma_M\tau^*))$ is maximal among the $\pi_l$-degrees of $b$,  so $\pi_l(d(\gamma_i\tau^*))<0$ for all $i$.  

Since $\pi_l(m-n)>0$, the terms with minimal $\pi_l$-degree in $b(s_\mu s_{\nu^*}a+a)$ come from $ba$.   Consider the term of minimal $\pi_l$-degree, $r_1t_1 s_{\gamma_1}s_{\tau^*}s_{\alpha_1}s_{\beta^*}$.  From above, $\pi_l(d(\gamma_1\tau^*))<0$, so 
$\pi_l(d(\gamma_1\tau^*\alpha_1\beta^*))=\pi_l(d( \gamma_1\tau^*))+\pi_l(d( \alpha_1\beta^*))<\pi_l(d( \alpha_1\beta^*))$.  But $s_{\alpha_1}s_{\beta^*}$ is the term on the right-hand side of \eqref{an element =a} of minimal $\pi_l$-degree.  So there is no component of $a$ that has degree $d(\gamma_1\tau^*\alpha_1\beta^*)$ and hence \eqref{an element =a} cannot hold. 
So there does not exist $b\in \KP_K(\Lambda)$ such that $b(s_\mu s_{\nu^*}a+a)=a$. Hence $\{0\}\neq\KP_K(\Lambda)(as_\mu s_{\nu^*}+a)\subsetneq \KP_K(\Lambda)a$; in particular, $\KP_K(\Lambda)a$ is not a minimal left-ideal. \end{proof}

\begin{lem}\label{lem: line gives K}If $v\in P_l(\Lambda)$ then $p_v\KP_K(\Lambda)p_v=Kp_v$.\end{lem}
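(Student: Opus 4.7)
The plan is to take an arbitrary $a\in p_v\KP_K(\Lambda)p_v$ and show it must be a scalar multiple of $p_v$, by exploiting the rigidity of paths emanating from a line point. If $a=0$ there is nothing to prove, so write $a=\sum_{(\alpha,\beta)\in F}r_{\alpha,\beta}s_\alpha s_{\beta^*}$ in normal form \eqref{normalform}, with all $\beta\in\Lambda^m$ for some fixed $m\in\N^k$ and each $r_{\alpha,\beta}\in K\setminus\{0\}$. Since $a=p_v a p_v$, I would first multiply each summand by $p_v$ on the left and right and apply (KP1)--(KP2) to see that terms with $r(\alpha)\neq v$ or $r(\beta)\neq v$ vanish; so I may assume $r(\alpha)=r(\beta)=v$ for every $(\alpha,\beta)\in F$.

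Next I would invoke the hypothesis that $v\in P_l(\Lambda)$: write $v\Lambda^\infty=\{y\}$ with $y$ aperiodic. By Remark~\ref{many remarks}\eqref{many remarks one}, $|v\Lambda^n|=1$ for every $n\in\N^k$, and the unique element of $v\Lambda^n$ is necessarily $y(0,n)$. In particular every $\beta$ in the normal form is forced to equal the single path $\beta_0:=y(0,m)$, and each $\alpha$ with $r(\alpha)=v$ must be $y(0,d(\alpha))$. The normal-form condition $s(\alpha)=s(\beta_0)=y(m)$ gives $y(d(\alpha))=y(m)$, so by Remark~\ref{many remarks}\eqref{many remarks two} the aperiodicity of $y$ forces $d(\alpha)=m$, and hence $\alpha=y(0,m)=\beta_0$.

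Thus the normal form collapses to a single term $a=r\,s_{\beta_0}s_{\beta_0^*}$ for some $r\in K$. Finally, Remark~\ref{many remarks}\eqref{many remarks four} applied to $\beta_0\in v\Lambda$ (using $|v\Lambda^\infty|=1$) gives $s_{\beta_0}s_{\beta_0^*}=p_v$, so $a=rp_v\in Kp_v$. The reverse inclusion $Kp_v\subseteq p_v\KP_K(\Lambda)p_v$ is trivial since $p_v$ is idempotent. I do not anticipate any serious obstacle here; the only substantive step is the application of aperiodicity at precisely the point where distinct degrees $d(\alpha)\neq m$ would force $y$ to be periodic, and everything else is routine bookkeeping with the normal form.
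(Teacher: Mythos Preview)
Your proof is correct and follows essentially the same approach as the paper. Both arguments reduce to showing that if $\alpha,\beta\in v\Lambda$ with $s(\alpha)=s(\beta)$ then $\alpha=\beta$, via Remarks~\ref{many remarks}\eqref{many remarks one}, \eqref{many remarks two} and \eqref{many remarks four}; the only cosmetic difference is that the paper works directly with individual spanning elements $p_v s_\alpha s_{\beta^*}p_v$ rather than passing through a normal form for $a$.
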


\begin{proof} Let $v\in P_l(\Lambda)$, that is, $v\Lambda^\infty=\{x\}$ where $x$ is aperiodic.  To see  $p_v\KP_K(\Lambda)p_v\subset Kp_v$, let $p_vs_\alpha s_{\beta^*}p_v$ be nonzero. Then $s(\alpha)=s(\beta)$ and $\alpha,\beta\in v\Lambda$. It follows from Remark~\ref{many remarks}\eqref{many remarks two} that $\alpha=\beta$.  Now $
p_vs_\alpha s_{\beta^*}p_v=p_vs_\alpha s_{\alpha^*}p_v=p_v
$
by (KP4) and  Remark~\ref{many remarks}\eqref{many remarks four}.

Since elements of the form $s_\alpha s_{\beta^*}$ generate $\KP_K(\Lambda)$ we get $p_v\KP_K(\Lambda)p_v\subset Kp_v$.  The other inclusion is immediate because $p_v=p_v^3$.
\end{proof}

\begin{thm}\label{thm: min come from line}  Let $\Lambda$ be a row-finite $k$-graph  with no sources and let $K$ be a field.   
\begin{enumerate}
\item\label{thm: min come from line one} Let $v\in P_l(\Lambda)$. Then $\KP_K(\Lambda)p_v$ is a minimal left-ideal if and only if $v\in P_l(\Lambda)$.
\item\label{thm: min come from line two}  Let $a\in \KP_K(\Lambda)$. Then $\KP_K(\Lambda)a$ is a minimal left-ideal if and only if there exists $v\in P_l(\Lambda)$ such that $\KP_K(\Lambda)p_v$ is isomorphic to $\KP_K(\Lambda)a$ as left $\KP_K(\Lambda)$-modules.
\end{enumerate}
\end{thm}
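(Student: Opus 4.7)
The plan is to treat part (1) directly and reduce part (2) to part (1) using Lemma~\ref{lem: min in corner}. The ``only if'' direction of part (1) is essentially free: if $\KP_K(\Lambda)p_v$ is a minimal left-ideal then (taking $a=p_v\in p_v\KP_K(\Lambda)p_v$) Proposition~\ref{new prop pv min vL=1}\eqref{new prop pv min vL=1two} forces $|v\Lambda^\infty|=1$, and Proposition~\ref{prop: min non per 2} then forces the unique element of $v\Lambda^\infty$ to be aperiodic, so $v\in P_l(\Lambda)$.

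For the converse, assume $v\in P_l(\Lambda)$ and pick a nonzero $a\in\KP_K(\Lambda)p_v$; it suffices to show $p_v\in\KP_K(\Lambda)a$, since this gives $\KP_K(\Lambda)p_v\subset\KP_K(\Lambda)a$, and combined with $a=ap_v\in\KP_K(\Lambda)p_v$ forces $\KP_K(\Lambda)a=\KP_K(\Lambda)p_v$, whence minimality of $\KP_K(\Lambda)p_v$ (by applying the same argument to any nonzero sub-ideal). I would write $a=ap_v$ in normal form $a=\sum r_{\alpha,\beta}s_\alpha s_{\beta^*}$. The common degree of the $\beta$'s in a normal form combined with $a=ap_v$ (which forces $r(\beta)=v$) and $|v\Lambda^m|=1$ for every $m\in\N^k$ (Remark~\ref{many remarks}\eqref{many remarks one}) pins down a single $\beta$, so $a=\bigl(\sum_{j=1}^N r_j s_{\alpha_j}\bigr)s_{\beta^*}$ with $s(\alpha_j)=s(\beta)$ for all $j$. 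Since $P_l(\Lambda)$ is hereditary (Remark~\ref{many remarks}\eqref{many remarks three}), $s(\beta)$ is again a line point.

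Next I would apply Lemma~\ref{lem: strengthen}\eqref{lem: strengthen 1} with $(\mu,\nu)=(\alpha_1,\beta)$ to obtain $s_{\alpha_1^*}as_\beta=r_1 p_{s(\beta)}+\sum_{d(\alpha_j)\neq d(\alpha_1)}r_j s_{\alpha_1^*}s_{\alpha_j}$. Each correction term $s_{\alpha_1^*}s_{\alpha_j}$ lies in $p_{s(\beta)}\KP_K(\Lambda)p_{s(\beta)}=Kp_{s(\beta)}$ by Lemma~\ref{lem: line gives K}, which is concentrated in $\Z^k$-degree $0$; but $s_{\alpha_1^*}s_{\alpha_j}$ has degree $d(\alpha_j)-d(\alpha_1)\neq 0$, so every correction term vanishes and $s_{\alpha_1^*}as_\beta=r_1 p_{s(\beta)}$. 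Using $a=ap_v$ together with the identity $p_v=s_\beta s_{\beta^*}$ from Remark~\ref{many remarks}\eqref{many remarks four}, I left-multiply by $s_\beta$ to collapse this to $s_\beta s_{\alpha_1^*}a=r_1 p_v$, and inverting $r_1\in K\setminus\{0\}$ exhibits $p_v$ inside $\KP_K(\Lambda)a$, as desired.

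For part (2), the ``if'' direction is immediate from part (1). For ``only if'', Lemma~\ref{lem: min in corner} produces $w\in\Lambda^0$ and $b\in p_w\KP_K(\Lambda)p_w$ with $\KP_K(\Lambda)a\cong\KP_K(\Lambda)b$, so $\KP_K(\Lambda)b$ is also minimal. Propositions~\ref{new prop pv min vL=1}\eqref{new prop pv min vL=1two} and \ref{prop: min non per 2} then give $w\in P_l(\Lambda)$, and Lemma~\ref{lem: line gives K} forces $b=rp_w$ with $r\neq 0$, so $\KP_K(\Lambda)b=\KP_K(\Lambda)p_w$ and the desired isomorphism follows. I expect the main obstacle to be the ``if'' direction of part (1): marrying the uniqueness of $\beta$ (which in turn forces the correction terms into the one-dimensional corner $Kp_{s(\beta)}$) with the $\Z^k$-degree argument that collapses those corrections, and then using the $p_v=s_\beta s_{\beta^*}$ identity to extract $p_v$ itself from $a$.
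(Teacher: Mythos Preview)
Your proof is correct, but the routes you take in the two parts differ from the paper's in interesting and complementary ways.

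For part~(1), the paper's argument for the ``if'' direction is shorter: once $v\in P_l(\Lambda)$, take any nonzero $a$ in the ideal $\KP_K(\Lambda)p_v$; nondegeneracy (Lemma~\ref{lem: KP nondegenerate}) gives $b$ with $p_vbap_v\neq 0$, and Lemma~\ref{lem: line gives K} then forces $p_vbap_v=tp_v$ for some $t\in K\setminus\{0\}$, so $p_v=(t^{-1}p_vb)ap_v\in\KP_K(\Lambda)ap_v$. Your explicit normal-form-plus-degree calculation is more constructive (it actually exhibits the left multiplier $r_1^{-1}s_\beta s_{\alpha_1^*}$), but it does more work than is needed. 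Incidentally, your phrase ``left-multiply by $s_\beta$'' is slightly imprecise; you are really multiplying on the right by $s_{\beta^*}$ (using $as_\beta s_{\beta^*}=ap_v=a$) and then on the left by $s_\beta$.

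For part~(2), the situation is reversed: your argument is cleaner than the paper's. The paper, after reducing to $a\in p_w\KP_K(\Lambda)p_w$ with $w\in P_l(\Lambda)$, invokes Lemma~\ref{lem: strengthen}\eqref{lem: strengthen 2} to find $\gamma,\eta$ with $s_{\gamma^*}as_\eta=rp_{s(\gamma)}$ and then builds an explicit module isomorphism $\KP_K(\Lambda)a\cong\KP_K(\Lambda)p_{s(\gamma)}$. You observe instead that Lemma~\ref{lem: line gives K} already gives $p_w\KP_K(\Lambda)p_w=Kp_w$, so the element $b$ produced by Lemma~\ref{lem: min in corner} is simply $rp_w$ for some nonzero $r$, and the isomorphism to $\KP_K(\Lambda)p_w$ is immediate. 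This bypasses the aperiodic-path machinery entirely.
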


\begin{proof}
\eqref{thm: min come from line one}
Let $v\in P_l(\Lambda)$.   We will show that $\KP_K(\Lambda)ap_v=\KP_K(\Lambda)p_v$ for all nonzero $a\in \KP_K(\Lambda)$; it then follows that $\KP_K(p_v)$ is a minimal left-ideal.
 Fix $0\neq a\in \KP_K(\Lambda)$. It is enough to show $p_v\in \KP_K(\Lambda)ap_v$.  By Lemma~\ref{lem: KP nondegenerate}, $ap_v\KP_K(\Lambda)ap_v\neq \{0\}$.  Thus there exists $b\in \KP_K(\Lambda)$ such that $p_vbap_v\neq 0$.  By  Lemma~\ref{lem: line gives K}, $p_v\KP_K(\Lambda)p_v=Kp_v$ so  there exists $t\in K\setminus\{0\}$ such that $tp_v=p_vbap_v$.  Now $p_v=(t\inv p_vb)ap_v\in \KP_K(\Lambda)ap_v$, giving $\KP_K(\Lambda)ap_v=\KP_K(\Lambda)p_v$. Thus $\KP_K(\Lambda)p_v$ is minimal.

Conversely, suppose that $\KP_K(\Lambda)p_v$ is minimal for some $v\in\Lambda^0$. By item \eqref{new prop pv min vL=1two} of  Proposition~\ref{new prop pv min vL=1} we have $v\Lambda^\infty=\{y\}$ for some $y$. Now  Proposition~\ref{prop: min non per 2} implies that $y$ is aperiodic. Thus $v\in P_l(\Lambda)$.

\eqref{thm: min come from line two}
Suppose $\KP_K(\Lambda)a$ is minimal.  By Lemma~\ref{lem: min in corner}, we can assume $a\in p_w\KP_K(\Lambda)p_w$ for some $w\in \Lambda^0$. Since $\KP_K(\Lambda)a$ is minimal, $w\Lambda^\infty=\{x\}$  by Proposition~\ref{new prop pv min vL=1}\eqref{new prop pv min vL=1two}, and then $x$ is aperiodic by Proposition~\ref{prop: min non per 2}.  Thus $w\in P_l(\Lambda)$.  Since $x\in w\Lambda^\infty$ is aperiodic,  Lemma~\ref{lem: strengthen}\eqref{lem: strengthen 2} shows there exist $\gamma,\eta\in \Lambda$ such that $s_{\gamma^*}as_{\eta}=rp_{s(\gamma)}\neq 0$ for some $r\in K$.  But $a\in p_w\KP_K(\Lambda)p_w$ so $\gamma,\eta\in w\Lambda$.  Since $\KP_K(\Lambda)a$ is minimal,  $\KP_K(\Lambda)a=\KP_K(\Lambda)r\inv s_{\gamma^*}a$. 

We claim that the map $cs_{\gamma^*}a\mapsto cs_{\gamma^*}as_{\eta}$ defines an isomorphism from 
$\KP_K(\Lambda)r\inv s_{\gamma^*}a$ to $\KP_K(\Lambda)s_{\gamma^*}as_\eta$.  This map is surjective  by definition, and is injective because the kernel is a left-ideal in the minimal left-ideal $\KP_K(\Lambda)s_{\gamma^*}a$, hence must be $\{0\}$.  Thus $\KP_K(\Lambda)a=\KP_K(\Lambda)r\inv s_{\gamma^*}a$ is isomorphic to $ \KP_K(\Lambda)s_{\gamma^*}as_\eta=\KP_K(\Lambda)p_{s(\gamma)}$ as claimed. 

Finally, $r(\gamma)=w\in P_l(\Lambda)$ and $P_l(\Lambda)$ is hereditary by Remark~\ref{many remarks}\eqref{many remarks four}, and hence $s(\gamma)\in P_l(\Lambda)$. Thus $\KP_K(\Lambda)a$ is isomorphic to $\KP_K(\Lambda)p_{s(\gamma)}$  and  $s(\gamma)\in P_l(\Lambda)$. 
The converse is immediate from \eqref{thm: min come from line one}.\end{proof}

The \emph{left socle} $\Soc_l(A)$ of an algebra $A$ is the sum of all of its minimal left-ideals; if the set of minimal left-ideals is empty then the left socle is zero. When $A$ is semiprime, the left socle and the right socle coincide \cite[Corollary~4.3.4]{BMM96}, and then the \emph{socle} $\Soc(A):=\Soc_l(A)$ of $A$ is a two-sided ideal.

By Lemma~\ref{lem: KP nondegenerate}, the Kumjian-Pask algebra $\KP_K(\Lambda)$ is semiprime. By Theorem~\ref{thm: min come from line}, we have $\sum_{v\in P_l(\Lambda)}\KP_K(\Lambda)p_v\subset \Soc(\KP_K(\Lambda))$ where $P_l(\Lambda)$ is the set of line points. The example in the proof of \cite[Proposition~4.1]{A-PM-BM-GS-M08} shows the reverse containment does not hold in general. In Theorem~\ref{thm: soc cor line} we show that $\Soc(\KP_K(\Lambda))$ is the ideal corresponding to the saturated hereditary  closure of the line points.  

When $k=1$ the first item of Theorem~\ref{thm: soc cor line} below is \cite[Theorem~4.2]{A-PM-BM-GS-M08} and the second item is \cite[Theorem~4.3]{S}. An ideal $I$ in an algebra is \emph{essential} if $a\in A$ and $aI=\{0\}$ implies $a=0$.

\begin{thm}\label{thm: soc cor line} Let $\Lambda$ be a row-finite $k$-graph  with no sources and let $K$ be a field. 
\begin{enumerate}
\item\label{thm: soc cor line one}   Let $J$ be the ideal generated by $\{p_v:v\in P_l(\Lambda)\}$. Then \[\Soc(\KP_K(\Lambda))=J=I_{\overline{P_l(\Lambda)}}.\]
\item\label{thm: soc cor line two} $\Soc(\KP_K(\Lambda))$ is an essential ideal of $\KP_K(\Lambda)$ if and only if every vertex connects to a line point.
\end{enumerate} \end{thm}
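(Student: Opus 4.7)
The plan is to prove (1) by a double inclusion and then use the identification in (1) to analyze the essentiality condition in (2). For $J\subseteq\Soc(\KP_K(\Lambda))$: Theorem~\ref{thm: min come from line}\eqref{thm: min come from line one} tells me each $\KP_K(\Lambda)p_v$ with $v\in P_l(\Lambda)$ is a minimal left ideal, hence a subset of $\Soc(\KP_K(\Lambda))$; semiprimeness (Lemma~\ref{lem: KP nondegenerate}) makes the socle a two-sided ideal, so the two-sided ideal $J$ generated by these $p_v$'s lies in $\Soc(\KP_K(\Lambda))$, and Lemma~\ref{lem: ideal vert} identifies $J=I_{\overline{P_l(\Lambda)}}$. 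For the reverse inclusion, I take a minimal left ideal $L=\KP_K(\Lambda)a$, write $a$ in normal form, and use Lemma~\ref{lem: min in corner} to pick $(\mu,\nu)\in F$ so that $b=s_{\mu^*}as_\nu\in p_v\KP_K(\Lambda)p_v$ with $v=s(\mu)$ and $\KP_K(\Lambda)b$ still minimal. Propositions~\ref{new prop pv min vL=1}\eqref{new prop pv min vL=1two} and~\ref{prop: min non per 2} then force $v\in P_l(\Lambda)$, so $p_v\in J$. Using $s_{\mu^*}a=p_{s(\mu)}s_{\mu^*}a$ from (KP2) together with the fact that $J$ is a right ideal gives $s_{\mu^*}a\in J$, and Lemma~\ref{lem: strengthen}\eqref{lem: strengthen 1} guarantees $s_{\mu^*}a\neq 0$. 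Hence $L\cap J$ is a nonzero sub-left-ideal of $L$, and minimality forces $L\subseteq J$.

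For part (2), direction $(\Leftarrow)$: let $0\neq a\in\KP_K(\Lambda)$, write $a$ in normal form, pick $(\mu,\nu)\in F$ with $v=s(\mu)=s(\nu)$, and choose $\lambda\in v\Lambda$ with $s(\lambda)\in P_l(\Lambda)$ via the hypothesis. Then $s_\nu s_\lambda=s_{\nu\lambda}\in\KP_K(\Lambda)p_{s(\lambda)}\subseteq\Soc(\KP_K(\Lambda))$. The key step is a graded computation: Lemma~\ref{lem: strengthen}\eqref{lem: strengthen 1} says the zero-graded component of $s_{\mu^*}as_\nu$ is $r_{\mu,\nu}p_v$, so the degree-$d(\lambda)$ component of $s_{\mu^*}as_\nu s_\lambda$ is $r_{\mu,\nu}p_vs_\lambda=r_{\mu,\nu}s_\lambda\neq 0$. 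Therefore $as_{\nu\lambda}\neq 0$, giving $a\Soc(\KP_K(\Lambda))\neq\{0\}$.

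For $(\Rightarrow)$ by contrapositive: assume some $v\in\Lambda^0$ has $v\Lambda P_l(\Lambda)=\emptyset$. Since $I_{\overline{P_l(\Lambda)}}$ is generated as a two-sided ideal by $\{p_w:w\in\overline{P_l(\Lambda)}\}$ and $\KP_K(\Lambda)=\spn_K\{s_\mu s_{\nu^*}\}$, it is $K$-spanned by products of the form $s_{\alpha_1}s_{\beta_1^*}p_ws_{\alpha_2}s_{\beta_2^*}$ with $w\in\overline{P_l(\Lambda)}$. For $p_v$ times such a product to be nonzero one needs $\alpha_1\in v\Lambda$ with $s(\alpha_1)=s(\beta_1)=w\in\overline{P_l(\Lambda)}$. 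Lemma~\ref{oldRemark} (applied with $W=P_l(\Lambda)$, which is hereditary by Remark~\ref{many remarks}\eqref{many remarks three}, so that $\Sigma^0(W)=P_l(\Lambda)$) then produces $\tau\in w\Lambda$ with $s(\tau)\in P_l(\Lambda)$, and $\alpha_1\tau\in v\Lambda P_l(\Lambda)$ contradicts the assumption. Hence $p_v\Soc(\KP_K(\Lambda))=\{0\}$ with $p_v\neq 0$, so $\Soc(\KP_K(\Lambda))$ is not essential.

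The main obstacle will be the graded-component bookkeeping in the $(\Leftarrow)$ direction of (2), combined with correctly unpacking the two-sided-ideal description of $I_{\overline{P_l(\Lambda)}}$ needed for $(\Rightarrow)$; the rest is a fairly direct application of previously-established lemmas.
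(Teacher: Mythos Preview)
Your argument is essentially correct, and in one place it is sharper than the paper's. Two comments are in order.

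\textbf{Part (2), direction $(\Leftarrow)$.} The paper first proves that the hypothesis forces $\Lambda$ to be aperiodic, then invokes Lemma~\ref{lem: strengthen}\eqref{lem: strengthen 2} to collapse $s_{\mu^*}as_\nu$ to a scalar multiple of a vertex idempotent $tp_w$, and finally multiplies by a path into $P_l(\Lambda)$. You bypass the aperiodicity step entirely: using only Lemma~\ref{lem: strengthen}\eqref{lem: strengthen 1}, you know the zero-graded component of $s_{\mu^*}as_\nu$ is $r_{\mu,\nu}p_v$, so right-multiplying by $s_\lambda$ shifts this to the (unique) degree-$d(\lambda)$ component $r_{\mu,\nu}s_\lambda\neq 0$. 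This is a cleaner and more self-contained argument; it avoids the detour through aperiodicity and the appeal to \cite[Lemma~6.2]{HRSW} hidden inside Lemma~\ref{lem: strengthen}\eqref{lem: strengthen 2}.

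\textbf{Part (2), direction $(\Rightarrow)$.} There is a small slip in your bookkeeping. In a nonzero product $p_v s_{\alpha_1}s_{\beta_1^*}p_w s_{\alpha_2}s_{\beta_2^*}$ you correctly get $r(\alpha_1)=v$ and $s(\alpha_1)=s(\beta_1)$, but the condition coming from $s_{\beta_1^*}p_w\neq 0$ is $r(\beta_1)=w$, not $s(\beta_1)=w$. The fix is immediate: since $\beta_1\in w\Lambda$ and $w\in\overline{P_l(\Lambda)}$ is hereditary, $s(\alpha_1)=s(\beta_1)\in\overline{P_l(\Lambda)}$, and then your Lemma~\ref{oldRemark} argument goes through. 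The paper sidesteps this by using the description $I_{\overline{P_l(\Lambda)}}=\spn\{s_\alpha s_{\beta^*}:s(\alpha)=s(\beta)\in\overline{P_l(\Lambda)}\}$ from \cite[Lemma~5.4]{A-PCaHR11}, which gives $\alpha\in v\Lambda\overline{P_l(\Lambda)}$ directly. Your part (1) is correct; the paper reaches $L\subset J$ via the module isomorphism of Theorem~\ref{thm: min come from line}\eqref{thm: min come from line two}, whereas you exhibit a nonzero element of $L\cap J$ and invoke minimality---both are fine.
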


\begin{proof} \eqref{thm: soc cor line one}  The proof is very similar to that of \cite[Theorem~4.2]{A-PM-BM-GS-M08}. By Lemma~\ref{lem: ideal vert}, $I_{\overline{P_l(\Lambda)}}=J$. To see  that $\Soc(\KP_K(\Lambda))\subset J$,  let $L$ be a minimal left-ideal of $\KP_K(\Lambda)$; we will show that $L\subset J$. Let $a\in L\setminus\{0\}$. Then $\KP_K(\Lambda)a\subset L$.  By the nondegeneracy of $\KP_K(\Lambda)$ (Lemma~\ref{lem: KP nondegenerate}) we have  $\KP_K(\Lambda)a\neq 0$.  Thus $L=\KP_K(\Lambda)a$ by the minimality of $L$.   By  Theorem~\ref{thm: min come from line}, $L=\KP_K(\Lambda)a$ is isomorphic to $\KP_K(\Lambda)p_v$ for some line point $v$.   Let $\phi:\KP_K(\Lambda)a\to  \KP_K(\Lambda)p_v$ be an isomorphism.  Then $\phi(a)=bp_v$   for some $b\in \KP_K(\Lambda).$  Note that since $v$ is a line point, $p_v\in J$ and so $a=\phi\inv(\phi(a))=\phi\inv(bp_v)=\phi\inv(bp_vp_v)=bp_v\phi\inv(p_v)\in J.$  Thus $L\subset J$ and hence $\Soc(\KP_K(\Lambda))\subset J$.

For the reverse containment, if $p_v\in P_l(\Lambda)$ then $\KP_K(\Lambda)p_v\subset\Soc(\KP_K(\Lambda))$.  Thus $p_v=p_vp_v\in \Soc(\KP_K(\Lambda))$ for all $v\in P_l(\Lambda)$.  Therefore $\Soc(\KP_K(\Lambda))$ is an ideal containing $\{p_v: v\in P_l(\Lambda)\}$ and so it contains $J$.  

\eqref{thm: soc cor line two}
 First suppose that every vertex in $\KP_K(\Lambda)$ connects to a line point. Let $a\in \KP_K(\Lambda)\setminus \{0\}$; we will show that $a\Soc(\KP_K(\Lambda))=aI_{\overline{P_l(\Lambda)}}\neq \{0\}$.  First we claim that $\Lambda$ is aperiodic.  To see this, let $u\in \Lambda^0$.  Then there exists a path $\lambda\in u\Lambda v$ where $v$ is a line point. But  $v\Lambda^\infty=\{x\}$ with $x$ aperiodic, and now $\lambda x$ is an aperiodic path in $u\Lambda^\infty$.  Thus $\Lambda$ is aperiodic as claimed.  
 
 Since $\Lambda$ is aperiodic there exist $\mu,\nu\in \Lambda$ such that $s_{\mu^*} as_\nu=tp_w$ for some $w\in \Lambda^0$ and some $t\in K\setminus \{0\}$ (see Lemma~\ref{lem: strengthen}\eqref{lem: strengthen 2}).  Since $w$ connects to a line point, there exists $\gamma\in w\Lambda P_l(\Lambda)$.  Thus $p_{s(\gamma)}\in  I_{\overline{P_l(\Lambda)}}$ and then $s_\nu s_\gamma=s_\nu s_\gamma p_{s(\gamma)}\in I_{\overline{P_l(\Lambda)}}$.  Therefore
 \[0\neq p_{s(\gamma)}=s_{\gamma^*}s_\gamma=s_{\gamma^*}p_ws_\gamma=t\inv s_{\gamma^*}s_{\mu^*} as_\nu s_\gamma\in t\inv s_{\gamma^*}s_{\mu^*} a I_{\overline{P_l(\Lambda)}}.
 \] 
 Thus  $aI_{\overline{P_l(\Lambda)}}\neq \{0\}$ as well.

Conversely, suppose that $\Soc(\KP_K(\Lambda))=I_{\overline{P_l(\Lambda)}}$ is essential. Pick $w\in \Lambda^0$. Since $I_{\overline{P_l(\Lambda)}}$ is essential, there exists $b\in I_{\overline{P_l(\Lambda)}}$ such that $p_w b\neq 0$.  By \cite[Lemma~5.4]{A-PCaHR11}, 
$I_{\overline{P_l(\Lambda)}}=\lsp\{s_\alpha s_{\beta^*}: s(\alpha)=s(\beta)\in \overline{P_l(\Lambda)}\}$ 
so  we can write $b=\sum_{(\alpha,\beta)\in F} r_{\alpha,\beta}s_{\alpha}s_{\beta^*}$ with $s(\alpha)=s(\beta)\in \overline{P_l(\Lambda)}$.  Since $p_w b\neq 0$, there exists $(\mu,\nu)\in F$ such that $p_ws_{\mu}s_{\nu^*}\neq 0$.  For this $\mu$ we  have  $\mu\in w\Lambda \overline{P_l(\Lambda)}$.  Since every element in $\overline{P_l(\Lambda)}$ connects to $P_l(\Lambda)$ by Lemma~\ref{oldRemark}, we can  pick $\eta\in s(\mu)\Lambda P_l(\Lambda)$.  Thus $\mu\eta$ connects $w$ to $P_l(\Lambda)$.
\end{proof}

The next corollary follows immediately from Theorem~\ref{thm: soc cor line}\eqref{thm: soc cor line one} and  the lattice isomorphism $H\mapsto I_H$ of the saturated hereditary  subsets of $\Lambda^0$ onto the graded ideals of $\KP_K(\Lambda)$ from \cite[Theorem~5.1]{A-PCaHR11}.

\begin{cor}\label{cor: 0 soc plus}  Let $\Lambda$ be a row-finite $k$-graph  with no sources and let $K$ be a field. 
\begin{enumerate}
\item $\Soc(\KP_K(\Lambda))$ is a graded ideal of $\KP_K(\Lambda)$.
\item 
 $\Soc(\KP_K(\Lambda))=\{0\}$ if and only if $P_l(\Lambda)=\emptyset$.
 \item  $\Soc(\KP_K(\Lambda))=\KP_K(\Lambda)$ if and only if $\overline{P_l(\Lambda)}=\Lambda^0$.
\end{enumerate}
\end{cor}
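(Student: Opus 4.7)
The plan is to read off all three items directly from Theorem~\ref{thm: soc cor line}\eqref{thm: soc cor line one}, which already identifies $\Soc(\KP_K(\Lambda))=I_{\overline{P_l(\Lambda)}}$, combined with the lattice isomorphism $H\mapsto I_H$ between saturated hereditary subsets of $\Lambda^0$ and graded ideals of $\KP_K(\Lambda)$ (cited from \cite[Theorem~5.1]{A-PCaHR11} just above Lemma~\ref{lem: ideal vert}).

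For item (1), I would simply observe that $\overline{P_l(\Lambda)}$ is saturated hereditary by definition, so it lies in the domain of the lattice isomorphism; hence $I_{\overline{P_l(\Lambda)}}=\Soc(\KP_K(\Lambda))$ is a graded ideal.

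For item (2), I would use that the lattice isomorphism is, in particular, an order-preserving bijection and that $I_\emptyset=\{0\}$. Thus $\Soc(\KP_K(\Lambda))=\{0\}$ if and only if $I_{\overline{P_l(\Lambda)}}=I_\emptyset$, if and only if $\overline{P_l(\Lambda)}=\emptyset$. Finally, $\overline{P_l(\Lambda)}\supseteq P_l(\Lambda)$, so $\overline{P_l(\Lambda)}=\emptyset$ exactly when $P_l(\Lambda)=\emptyset$ (noting that $\emptyset$ itself is trivially saturated hereditary, so it equals its own closure). For item (3), the analogous bijection argument applies: $I_{\Lambda^0}=\KP_K(\Lambda)$ because $\{p_v:v\in\Lambda^0\}$ generates the algebra, so $\Soc(\KP_K(\Lambda))=\KP_K(\Lambda)$ if and only if $I_{\overline{P_l(\Lambda)}}=I_{\Lambda^0}$, if and only if $\overline{P_l(\Lambda)}=\Lambda^0$.

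I do not foresee any obstacle in this proof: all of the work has been done in Theorem~\ref{thm: soc cor line} and the reference \cite[Theorem~5.1]{A-PCaHR11}, and the corollary amounts to unpacking the special cases $H=\overline{P_l(\Lambda)}$, $H=\emptyset$, and $H=\Lambda^0$ of the bijection $H\leftrightarrow I_H$.
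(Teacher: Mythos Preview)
Your proposal is correct and matches the paper's own approach exactly: the paper simply states that the corollary follows immediately from Theorem~\ref{thm: soc cor line}\eqref{thm: soc cor line one} and the lattice isomorphism $H\mapsto I_H$ of \cite[Theorem~5.1]{A-PCaHR11}. You have merely spelled out the details of this deduction.
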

 
 \begin{exs} \begin{enumerate} \item Let $\Lambda$ be a row-finite $k$-graph with one vertex $v$.  Then $P_l(\Lambda)=\emptyset$ and hence $\Soc(\KP_K(\Lambda))=\{0\}$ (for if there is just one infinite path then it must be periodic). In particular, the Kumjian-Pask algebras of the $2$-graphs with one vertex, $N_1$ blue edges and $N_2$ red edges considered by Davidson and Yang in \cite{DY09} all have zero socle.
 
\item  Every vertex of $\Omega_k$ is a line point, and hence $\Soc(\KP_K(\Omega_k))=\KP_K(\Omega_k)$.
\end{enumerate}
 \end{exs}

\section{Semisimple Kumjian-Pask algebras}\label{sec: ss}
Throughout this section $\Lambda$ is a row-finite $k$-graph with no sources and $K$ is a field. 
A semiprime ring $R$ is \emph{semisimple} if $\Soc(R)=R$. By \cite[Remark~4.3.5]{BMM96},  $\Soc(R)$ is a direct sum of minimal left-ideals. Thus $R$ is semisimple if and only if it is a direct sum of its minimal left-ideals.   By Corollary~\ref{cor: 0 soc plus}, $\KP_K(\Lambda)$ is semisimple if and only if the saturated hereditary  closure of the line points  is all of $\Lambda^0$.  In this section we study semisimple Kumjian-Pask algebras. The proof of the next lemma is very similar to the proof of its Leavitt path algebra version \cite[Lemma~2.2]{APPM10}.

\begin{lem}\label{lem: div ring}  Let $e$ be an idempotent in  $\KP_K(\Lambda)$.  Then the corner $e\KP_K(\Lambda)e$ is a division ring if and only if it is isomorphic to $K$ as a ring.\end{lem}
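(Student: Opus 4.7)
The forward implication is immediate since $K$ is a field, hence a division ring. For the converse, I plan to show that if $D := e\KP_K(\Lambda)e$ is a division ring, then $\KP_K(\Lambda)e$ is a minimal left ideal of $\KP_K(\Lambda)$; then Theorem~\ref{thm: min come from line}\eqref{thm: min come from line two} will identify $\KP_K(\Lambda)e$ with $\KP_K(\Lambda)p_v$ for some line point $v$, and Lemma~\ref{lem: line gives K} together with the standard identification of corners and endomorphism rings will finish the job.

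Minimality of $\KP_K(\Lambda)e$ will be established through the correspondence $L\mapsto eL$ between left submodules of $\KP_K(\Lambda)e$ and left ideals of $D$; since $D$ is a division ring its only left ideals are $\{0\}$ and $D$, so this will force $\KP_K(\Lambda)e$ to have no proper nonzero submodules. The main obstacle is verifying that $L\neq \{0\}$ implies $eL\neq \{0\}$. Given a nonzero submodule $L$ and $0\neq a\in L$, my argument would proceed as follows: if $eL=\{0\}$ then for every $c\in \KP_K(\Lambda)$ the element $ca$ again lies in $L$, so $eca=0$, which gives $e\KP_K(\Lambda)a=\{0\}$. Because $a\in \KP_K(\Lambda)e$ forces $a=ae$, this implies
\[
a\KP_K(\Lambda)a = (ae)\KP_K(\Lambda)a = a\cdot(e\KP_K(\Lambda)a) = \{0\},
\]
contradicting the nondegeneracy of $\KP_K(\Lambda)$ (Lemma~\ref{lem: KP nondegenerate}). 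Hence $eL$ is a nonzero left ideal of $D$ and must equal $D$, and then, using $e\in e\KP_K(\Lambda)e$,
\[
\KP_K(\Lambda)e \;\subseteq\; \KP_K(\Lambda)\cdot e\KP_K(\Lambda)e \;=\; \KP_K(\Lambda)\cdot eL \;\subseteq\; L,
\]
so $L=\KP_K(\Lambda)e$.

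With $\KP_K(\Lambda)e$ minimal, Theorem~\ref{thm: min come from line}\eqref{thm: min come from line two} produces a line point $v$ with $\KP_K(\Lambda)e\cong \KP_K(\Lambda)p_v$ as left $\KP_K(\Lambda)$-modules. Using the standard ring anti-isomorphism $fAf\cong \End_A(Af)^{\mathrm{op}}$ valid for any idempotent $f$ in an associative ring $A$ (given by $x\mapsto (y\mapsto yx)$), isomorphic modules have isomorphic endomorphism rings, so
\[
D^{\mathrm{op}} \;\cong\; \End_{\KP_K(\Lambda)}(\KP_K(\Lambda)e) \;\cong\; \End_{\KP_K(\Lambda)}(\KP_K(\Lambda)p_v) \;\cong\; (p_v\KP_K(\Lambda)p_v)^{\mathrm{op}}.
\]
Taking opposites and applying Lemma~\ref{lem: line gives K} then yields $D\cong p_v\KP_K(\Lambda)p_v = Kp_v\cong K$, completing the proof.
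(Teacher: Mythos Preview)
Your argument is correct and follows the same architecture as the paper: show $\KP_K(\Lambda)e$ is a minimal left ideal, invoke Theorem~\ref{thm: min come from line}\eqref{thm: min come from line two} to get a line point $v$, and transfer to $p_v\KP_K(\Lambda)p_v=Kp_v$ via the identification of corners with endomorphism rings. The only substantive difference is that where the paper cites \cite[Proposition~4.3.3]{BMM96} (using semiprimeness) to obtain minimality of $\KP_K(\Lambda)e$, you give a direct self-contained proof from nondegeneracy; your handling of the anti-isomorphism with $\End$ is also more careful than the paper's, though since the target is the commutative ring $K$ the distinction is immaterial. One cosmetic point: you have the labels ``forward'' and ``converse'' swapped---the trivial direction is that $e\KP_K(\Lambda)e\cong K$ implies it is a division ring, which is the converse in the statement as written.
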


\begin{proof}Suppose $e\KP_K(\Lambda)e$ is a division ring. By Lemma~\ref{lem: KP nondegenerate}, $\KP_K(\Lambda)$ is  semiprime.  Now \cite[Proposition~4.3.3]{BMM96} applies, giving that $\KP_K(\Lambda)e$ is a minimal left-ideal.  By Theorem~\ref{thm: min come from line}\eqref{thm: min come from line two}, there exists $v\in P_l(\Lambda)$ such that $\KP_K(\Lambda)e$ is isomorphic to $\KP_K(\Lambda)p_v$ as a left $\KP_K(\Lambda)$-module.  Lemma~\ref{lem: line gives K} says that  $p_v\KP_K(\Lambda)p_v$ is isomorphic to $Kp_v$ as a ring; since $Kp_v$ and $K$ are isomorphic,  so are $p_v\KP_K(\Lambda)p_v$ and  $K$.

For an idempotent $f$ in a ring $R$, the map $\phi: fRf\to \End_R(Rf)$, where $\phi(frf)(tf)=tfrf$, is a ring isomorphism. Thus 
\[
e\KP_K(\Lambda)e\cong \End_{\KP_K(\Lambda)}(\KP_K(\Lambda)e)\cong \End_{\KP_K(\Lambda)}(\KP_K(\Lambda)p_v)\cong p_v\KP_K(\Lambda)p_v\cong K.\]
 The converse is obvious.
 \end{proof}

To state our main theorem we need a few definitions.  Suppose $V$ is a cancellative abelian monoid.  An element $u\in V$ is a \emph{unit} if there exists $v\in V$ such that $u+v=0$. An element $s\in V$  is an \emph{atom} (or irreducible) if $s=u+v$ implies $u$ or $v$ is a unit; $V$ is \emph{atomic} if every element of the monoid is a sum of atoms, and $V$ satisfies the \emph{refinement property} if  $a+b=c+d$  in $V$ implies there exist $z_1,z_2,z_3,z_4\in V$ such that $a=z_1+z_2$, $b=z_3+z_4$, $c=z_1+z_3$ and $d=z_2+z_4$.

 For a ring $R$, denote by  $V(R)$  the monoid of finitely generated projective modules over $R$.   The direct sum of nonzero projective modules is nonzero, so the only unit in $V(R)$ is $0$. Thus  $s\in V(R)$ is an atom if $s=u+v$ implies that either $u$ or $v$ is zero.  
Let $e$ and $f$ be idempotents in $R$. Since $R\cong Re\oplus R(1-e)$, $Re$ is a projective module.  We say $e$ and $f$ are \emph{equivalent} if $Re\cong Rf$, and we denote the equivalence class of $e$ in $R$ by $[e]$. If there exist $a,b\in R$ such that $e=ab$ and $f=ba$ then the map $Re\to Rf: re\mapsto rea$ is an isomorphism of $R$-modules with inverse $rf\mapsto rfb$, and hence $e$ and $f$ are equivalent.  We identify the set of equivalence classes of idempotents in $R$ with a submonoid of $V(R)$ via $[e]\mapsto Re$.

Let $R$ be a ring with local units.  There is an order on idempotents in $R$ where $e\leq f$ if and only if $ef=e$.   We claim that  $e\leq f$ if and only if $Re\subset Rf$. If $e\leq f$ then $aef=ae$ for all $a\in R$, so $Re\subset Rf$.  Conversely, if $Re\subset Rf$ then $e\in Rf$ since $R$ has local units. So $e=af$ for some $a\in R$ and hence $ef=aff=af=e$. Thus  $e\leq f$, completing the claim.  

A ring $R$ is \emph{von Neumann regular} if for every $a\in R$ there exists a $b\in R$ such that $aba=a$.  We need the following lemma to prove Theorem~\ref{thm: KP semisimple}. 

\begin{lem}\label{lem: R A give one inf} If $V(\KP_K(\Lambda))$ is an atomic monoid with the refinement property then for every $y\in \Lambda^\infty$ there exists $m\in \N^k$ such that $y(m)\Lambda^\infty=\{\sigma^m(y)\}$.\end{lem}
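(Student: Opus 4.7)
I would argue by contradiction. Suppose that $|y(m)\Lambda^\infty|>1$ for every $m\in\N^k$. By Remark~\ref{many remarks}\eqref{many remarks one}, for each such $m$ there is some $n(m)\in\N^k$ with $|y(m)\Lambda^{n(m)}|>1$.

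First I would build a sequence by setting $m_0 = 0$ and $m_{i+1} = m_i + n(m_i)$, writing $v_i := y(m_i)$ and $\alpha_i := y(m_i,m_{i+1})\in v_i\Lambda^{n(m_i)}$. Applying (KP4) at each $v_i$,
\[
p_{v_i} \;=\; s_{\alpha_i} s_{\alpha_i^*} \;+\; \sum_{\lambda \in v_i\Lambda^{n(m_i)}\setminus\{\alpha_i\}} s_\lambda s_{\lambda^*}.
\]
Since $s_{\lambda^*}s_\lambda = p_{s(\lambda)}$ by (KP3), each $s_\lambda s_{\lambda^*}$ is equivalent (in the sense defined just before Lemma~\ref{lem: R A give one inf}) to $p_{s(\lambda)}$. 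Passing to $V(\KP_K(\Lambda))$ I obtain $[p_{v_i}] = [p_{v_{i+1}}] + [q_i]$, where $[q_i] := \sum_{\lambda\neq\alpha_i}[p_{s(\lambda)}]$. The sum defining $[q_i]$ is nonempty because $|v_i\Lambda^{n(m_i)}|>1$, and each $[p_{s(\lambda)}]$ is nonzero because $p_{s(\lambda)}$ is a nonzero idempotent; since $V(\KP_K(\Lambda))$ is conical (its only unit is $0$), it follows that $[q_i]\neq 0$ for every $i$. Iterating gives
\[
[p_{v_0}] \;=\; [p_{v_N}] \;+\; \sum_{i=0}^{N-1}[q_i] \qquad\text{for all } N\in\N,
\]
a decomposition of $[p_{v_0}]$ into $N+1$ nonzero summands.

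The key ingredient, which I expect to be the main obstacle, is a counting lemma: \emph{in a cancellative conical abelian monoid with the refinement property, if $[a]$ admits a decomposition into $L$ atoms and also a decomposition into $N$ nonzero summands, then $N\leq L$}. I would prove this by induction on $L$. Given $[a] = [e_1]+\dots+[e_L] = [b_1]+([b_2]+\dots+[b_N])$, refinement produces $z_1,z_2,z_3,z_4$ with $[b_1]=[z_1]+[z_2]$, $[b_2]+\dots+[b_N] = [z_3]+[z_4]$, $[e_1]=[z_1]+[z_3]$ and $[e_2]+\dots+[e_L]=[z_2]+[z_4]$; because $[e_1]$ is an atom and the monoid is conical, either $[z_1]=0$ or $[z_3]=0$, and in each case the resulting relations reduce to a sum of $L-1$ atoms to which the inductive hypothesis applies.

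Finally, by atomicity write $[p_{v_0}] = \sum_{j=1}^L [e_j]$ with each $[e_j]$ an atom. Applying the counting lemma to the $(N+1)$-fold nonzero decomposition above forces $N+1\leq L$ for every $N$, which is impossible. The contradiction shows that some $m\in\N^k$ has $|y(m)\Lambda^\infty|=1$; since $\sigma^m(y)\in y(m)\Lambda^\infty$ in general, we conclude $y(m)\Lambda^\infty=\{\sigma^m(y)\}$, as required.
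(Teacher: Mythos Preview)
Your strategy coincides with the paper's: produce, for each $N$, a decomposition of a fixed class in $V(\KP_K(\Lambda))$ into $N{+}1$ nonzero summands and contradict the finite atom-length via refinement. Your use of (KP4) at successive vertices to obtain $[p_{v_0}]=[p_{v_N}]+\sum_{i}[q_i]$ is equivalent to the paper's chain of idempotents $e_i=s_{y(0,m_i)}s_{y(0,m_i)^*}$, since $[e_i]=[p_{v_i}]$ and $[e_i-e_{i+1}]=[q_i]$.

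There is, however, a gap in your counting lemma. First, you assume cancellativity, but this is \emph{not} among the hypotheses of Lemma~\ref{lem: R A give one inf}; fortunately the counting lemma does not need it. Second, your $2\times2$ inductive sketch is incomplete: in the case $z_1=0$ one obtains $e_2+\cdots+e_L=b_1+z_4$ and $b_2+\cdots+b_N=e_1+z_4$, and neither relation pits $L-1$ atoms against the remaining $b_i$'s---without cancellation you cannot eliminate $z_4$ to reduce as claimed. The clean repair (and what the paper actually does) is to pass to the full $L\times(N{+}1)$ matrix form of refinement, a standard consequence of the $2\times2$ case, giving elements $z_{j,l}$ with $(\text{atom})_j=\sum_l z_{j,l}$ and $(\text{nonzero summand})_l=\sum_j z_{j,l}$. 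Since each atom is indecomposable in a conical monoid, at most one $z_{j,l}$ is nonzero for each $j$, while each column must contain a nonzero entry; hence $N+1\le L$, and the contradiction follows without any appeal to cancellation.
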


\begin{proof} By way of contradiction, suppose  there exists $y\in \Lambda^\infty$ such that $|y(m)\Lambda^\infty|>1$ for all $m\in \N^k$. Let $v=y(0)$.  Using Remark~\ref{many remarks}\eqref{many remarks one},  there exists a sequence $\{m_i\}\subset\N^k$ such that $m_{i+1}\geq m_i$ and $|y(m_i)\Lambda^{m_{i+1}-m_i}|>1.$  For $i\in\N\setminus\{0\}$ set
\[e_i:=s_{y(0,m_i)}s_{y(0,m_i)^*}.\]
We claim $\{p_v\KP_K(\Lambda)e_i\}$ is a strictly decreasing sequence of left ideals in $p_v \KP_K(\Lambda)p_v$.

Let $i\in\N\setminus\{0\}$. To see $p_v\KP_K(\Lambda)e_i$ contains $p_v\KP_K(\Lambda)e_{i+1}$ note that
\begin{align*}
e_{i+1}e_i
&=s_{y(0,m_{i+1})}s_{y(0,m_{i+1})^*}s_{y(0,m_i)}s_{y(0,m_i)^*}\\
&=s_{y(0,m_{i+1})}s_{y(m_i, m_{i+1})^*}s_{y(0,m_{i})^*}s_{y(0,m_i)}s_{y(0,m_i)^*}\\
&=s_{y(0,m_{i+1})}s_{y(m_i, m_{i+1})^*}s_{y(0,m_i)^*}\quad\text{(using (KP3))}\\
&=s_{y(0,m_{i+1})}s_{y(0,m_{i+1})^*}=e_{i+1} \quad\text{(using (KP2))}.
\end{align*}
This shows that $e_i\geq e_{i+1}$ and that $p_v\KP_K(\Lambda)e_i\supset p_v\KP_K(\Lambda)e_{i+1}$.  To see the  containment is strict we will show that  $e_{i}\notin p_v\KP_K(\Lambda)e_{i+1}$. Since $|y(m_i)\Lambda^{m_{i+1}-m_i}|>1,$ we can pick $\mu_i\in y(m_i)\Lambda^{m_{i+1}-m_i}\setminus\{y(m_i, m_{i+1})\}$.  Consider $\nu_i=y(0,m_i)\mu_i$.  Then, using (KP3), 
\[e_is_{\nu_i}=s_{y(0,m_i)}s_{y(0,m_i)^*}s_{\nu_i}=s_{y(0,m_i)}s_{\mu_i}=s_{\nu_i}\neq 0.\]
But for any $a\in\KP_K(\Lambda)$
\[p_vae_{i+1} s_{\nu_i}=p_vas_{y(0,m_{i+1})}s_{y(m_i, m_{i+1})^*}s_{y(0,m_{i})^*}s_{y(0,m_i)}s_{\mu_i}=0\]
by (KP3) because $\mu_i$ has degree $m_{i+1}-m_i$.  Thus $p_vae_{i+1}\neq e_i$ for all $a\in \KP_K(\Lambda)$ and hence $e_i\notin p_v\KP_K(\Lambda)e_{i+1}$.  Therefore
$p_v\KP_K(\Lambda)e_i\supsetneq p_v\KP_K(\Lambda)e_{i+1}$ and $e_i\gneq e_{i+1}$.  
Now  $e_{i+1}$ and $e_{i}-e_{i+1}$ are nonzero orthogonal idempotents and, denoting an orthogonal sum with $\oplus$, we have   $e_i=e_{i+1}\oplus(e_{i}-e_{i+1}).$  

Since $V(\KP_K(\Lambda))$ is atomic by assumption, there exists $N$ and atoms $q_1,\ldots, q_N\in V(\KP_K(\Lambda))$ so that $[e_1]=q_1+\cdots+q_N$. For this $N$, we have 
\[e_1=e_{N+1}\oplus (e_{N}-e_{N+1})\oplus\cdots\oplus (e_1-e_2).\]
So in $V(\KP_K(\Lambda))$ we have \[[e_{N+1}]+ [e_{N}-e_{N+1}]+\cdots+ [e_1-e_2]=q_1+\cdots+q_N.\]
We note here for future use that $[e_{N+1}]\neq 0$ because
$e_{N+1}=s_{y(0,m_{N+1})}s_{y(0, m_{N+1})^*}$  is equivalent to $s_{y(0, m_{N+1})^*}s_{y(0,m_{N+1})}=p_{y(m_{N+1})}\neq 0$.

By assumption $V(\KP_K(\Lambda))$ has the refinement property, so there exist $z_{j,l}$, $1\leq j\leq N, 1\leq l\leq N+1$ in $V(\KP_K(\Lambda))$ such that 
\[q_j=\sum_l z_{j,l},\quad [e_l-e_{l+1}]=\sum_j z_{j,l}\ \text{for $1\leq l\leq N$}\quad \text{and}\quad [e_{N+1}]=\sum_j z_{j,N+1}.
\]
Since $q_j$ is  an atom for each $j$,  there exists a unique $l_j\in \{1,\ldots N+1\}$ such that $z_{j, l_j}\neq 0$. In other words, there are precisely $N$ nonzero $z_{j,l}$. So there exists an $l_0\in \{1,\ldots, N+1\}$ such that $z_{j,l_0}=0$ for all $j$. But $[e_{N+1}]\neq 0$, so  $l_0\neq N+1$ and $[e_{l_0}-e_{l_0+1}]=0$.  Now 
$\{0\}=\KP_K(\Lambda)=\KP_K(\Lambda)(e_{l_0}-e_{l_0+1})$. By nondegeneracy of $\KP_K(\Lambda)$  we have $e_{l_0}=e_{l_0+1}$,  a contradiction.  Hence there exists $m$ such that $|y(m)\Lambda^\infty|=1$.
\end{proof}

In \cite{APPM10} Abrams, Aranda Pino, Perera and Siles Molina ``characterize the semisimple Leavitt path algebras by describing them in categorical, ring-theoretic, graph-theoretic, and explicit terms.''  They start by proving in \cite[Theorem~2.3]{APPM10} that a semiprime ring $R$ with local units  is semisimple if and only if eight equivalent conditions hold.  Among these conditions are  that $R$ is locally left (respectively, right) artinian, that every corner of $R$ is left (respectively, right) artinian, and that $R$ is isomorphic to a certain direct sum of matrix algebras over corners of $R$. In \cite[Theorem~2.4]{APPM10} they restrict their attention to the Leavitt path algebra $L(E)$ of a directed graph $E$  and link the conditions of \cite[Theorem~2.3]{APPM10} to $L(E)$ being  categorically left (respectively, right) artinian, $L(E)$ being von Neumann regular and properties of the monoid $V(L(E))$,  and purely graph-theoretic conditions. Our  Theorem~\ref{thm: KP semisimple} is the higher-rank analogue of \cite[Theorem~2.4]{APPM10}, but we have omitted some of the many equivalent  conditions of \cite[Theorem~2.4]{APPM10}  to concentrate on those pertinent to $\KP_K(\Lambda)$ and $\Lambda$ in particular. 

\begin{thm}\label{thm: KP semisimple}Let $\Lambda$ be a row-finite $k$-graph with no sources and $K$ be a field.  The following conditions are equivalent.
\begin{enumerate}
\item \label{part ss}$\KP_K(\Lambda)$ is semisimple.
\item \label{part matrix dec} There exists a countable set $\Upsilon$  and for each $i\in\Upsilon$ there exists $n_i\in \N\cup \{\infty\}$ such that $\KP_K(\Lambda)\cong \bigoplus_{i\in \Upsilon} M_{n_i}(K)$.
\item\label{part vN R and N} There exists a countable set $\Upsilon$  and for each $i\in\Upsilon$ there exists $n_i\in \N\cup \{\infty\}$ such that $V(\KP_K(\Lambda))\cong \oplus_{i\in \Upsilon}\N$, and $\KP_K(\Lambda)$ is von Neumann regular.
\item\label{part vN R and atomic} $V(\KP_K(\Lambda))$ is cancellative and atomic, and $\KP_K(\Lambda)$ is von Neumann regular.
\item \label{part graph cond} For every $y\in \Lambda^\infty$, there exists $m\in \N^k$ such that $y(m)\in P_l(\Lambda)$.
\end{enumerate}
\end{thm}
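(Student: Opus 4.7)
The plan is to prove the cycle $(1)\Rightarrow(2)\Rightarrow(3)\Rightarrow(4)\Rightarrow(5)\Rightarrow(1)$.

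For the three easy links: $(1)\Rightarrow(2)$ follows by decomposing $\KP_K(\Lambda)$ into minimal left-ideals (semisimplicity), grouping them by isomorphism class (countably many, since Theorem~\ref{thm: min come from line}\eqref{thm: min come from line two} gives each class a representative $\KP_K(\Lambda)p_v$ with $v\in P_l(\Lambda)\subset\Lambda^0$), and identifying each isotypic component with a matrix algebra $M_{n_i}(D_i)$ via endomorphism rings; the scalar division ring is forced to be $K$ by Schur's lemma combined with Lemma~\ref{lem: div ring} and Lemma~\ref{lem: line gives K}. For $(2)\Rightarrow(3)$, $V(M_n(K))\cong\N$ via the rank function for every $n\in\N\cup\{\infty\}$, matrix algebras over $K$ are von Neumann regular, and both properties pass through ring direct sums. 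For $(3)\Rightarrow(4)$, $\bigoplus_i\N$ is manifestly cancellative and atomic.

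The substantive step is $(4)\Rightarrow(5)$. Von Neumann regularity of $\KP_K(\Lambda)$ forces the refinement property on $V(\KP_K(\Lambda))$, so Lemma~\ref{lem: R A give one inf} applies and yields, for every $y\in\Lambda^\infty$, some $m\in\N^k$ with $|y(m)\Lambda^\infty|=1$. To upgrade this to $y(m)\in P_l(\Lambda)$, use atomicity: write $[p_{y(m)}]=[e_1]+\cdots+[e_r]$ as a finite sum of atoms in $V(\KP_K(\Lambda))$. Each $e_i\KP_K(\Lambda)e_i$ is a corner of a von Neumann regular ring, hence von Neumann regular, and has only trivial idempotents (because $\KP_K(\Lambda)e_i$ is indecomposable), so it is a division ring; Lemma~\ref{lem: div ring} gives $e_i\KP_K(\Lambda)e_i\cong K$, and the Schur-type argument used to prove Lemma~\ref{lem: div ring} identifies $\KP_K(\Lambda)e_i$ as a minimal left-ideal. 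Theorem~\ref{thm: min come from line}\eqref{thm: min come from line two} then realizes $\KP_K(\Lambda)e_i\cong\KP_K(\Lambda)p_{w_i}$ for some $w_i\in P_l(\Lambda)$, so $\KP_K(\Lambda)p_{y(m)}$ is isomorphic to a finite direct sum of minimal left-ideals; transporting the summands through this isomorphism exhibits $\KP_K(\Lambda)p_{y(m)}$ as a sum of minimal left-ideals of $\KP_K(\Lambda)$, yielding $p_{y(m)}\in\Soc(\KP_K(\Lambda))=I_{\overline{P_l(\Lambda)}}$ by Theorem~\ref{thm: soc cor line}\eqref{thm: soc cor line one}, that is, $y(m)\in\overline{P_l(\Lambda)}$. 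Finally Lemma~\ref{oldRemark} (with $W=P_l(\Lambda)$, hereditary by Remark~\ref{many remarks}\eqref{many remarks three}) supplies $\gamma\in y(m)\Lambda P_l(\Lambda)$, and $|y(m)\Lambda^\infty|=1$ forces $\gamma=y(m,m+d(\gamma))$, so $y(m+d(\gamma))=s(\gamma)\in P_l(\Lambda)$.

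For $(5)\Rightarrow(1)$, by Corollary~\ref{cor: 0 soc plus} it suffices to show $\overline{P_l(\Lambda)}=\Lambda^0$. Assume for contradiction that $v\notin\overline{P_l(\Lambda)}$. Saturation of $\overline{P_l(\Lambda)}$ implies that for every $u\notin\overline{P_l(\Lambda)}$ and every $n\in\N^k$ there exists $\mu\in u\Lambda^n$ with $s(\mu)\notin\overline{P_l(\Lambda)}$; I would build $y\in v\Lambda^\infty$ inductively along the cofinal sequence $(N,\ldots,N)$, at each step using row-finiteness to select a path-extension into $\Lambda^0\setminus\overline{P_l(\Lambda)}$ and the unique-factorization property to assemble the pieces into the required functor $y\colon\Omega_k\to\Lambda$, with $y(N,\ldots,N)\notin\overline{P_l(\Lambda)}$ for all $N$. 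By hereditariness of $\overline{P_l(\Lambda)}$, $y(m)\notin\overline{P_l(\Lambda)}\supseteq P_l(\Lambda)$ for every $m\in\N^k$, contradicting (5). The main obstacle is the aperiodicity half of $(4)\Rightarrow(5)$: lifting $|y(m)\Lambda^\infty|=1$ from Lemma~\ref{lem: R A give one inf} to $y(m)\in P_l(\Lambda)$. The pivotal fact is that in a von Neumann regular ring an atom of $V$ corresponds to a minimal idempotent, via ``trivial idempotents $\Rightarrow$ division ring'' combined with Lemma~\ref{lem: div ring}, which via Theorem~\ref{thm: min come from line} is forced to be the class of a line point.
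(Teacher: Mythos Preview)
Your cycle and the easy links $(1)\Rightarrow(2)\Rightarrow(3)\Rightarrow(4)$ and $(5)\Rightarrow(1)$ match the paper's argument (the paper invokes \cite[Theorem~2.3]{APPM10} for $(1)\Rightarrow(2)$, which packages exactly the Wedderburn-type decomposition you sketch).

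The interesting divergence is in $(4)\Rightarrow(5)$. After Lemma~\ref{lem: R A give one inf} supplies $m$ with $|y(m)\Lambda^\infty|=1$, the paper rules out periodicity of $\sigma^m(y)$ by a direct degree computation: assuming $z:=\sigma^m(y)$ is periodic with $z(m')=z(n')$, it feeds $p_v+s_{z(0,m')}s_{z(0,n')^*}$ into the von Neumann regularity equation and compares $\pi_l$-degrees of the extreme monomials (using Lemma~\ref{lem: nonzero}) to reach a contradiction. Your route is instead structural: decompose $[p_{y(m)}]$ into atoms, realise the atoms as orthogonal idempotents $e_i\leq p_{y(m)}$ inside $\KP_K(\Lambda)$ (possible because $\KP_K(\Lambda)p_{y(m)}$ is cyclic, so its direct summands are $\KP_K(\Lambda)g$ for idempotents $g\in p_{y(m)}\KP_K(\Lambda)p_{y(m)}$), observe that each $e_i\KP_K(\Lambda)e_i$ is a von Neumann regular ring with no nontrivial idempotents and hence a division ring, and then invoke \cite[Proposition~4.3.3]{BMM96} (the ingredient inside Lemma~\ref{lem: div ring}) to conclude each $\KP_K(\Lambda)e_i$ is minimal. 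This places $p_{y(m)}$ in $\Soc(\KP_K(\Lambda))=I_{\overline{P_l(\Lambda)}}$ via Theorem~\ref{thm: soc cor line}\eqref{thm: soc cor line one}, and then Lemma~\ref{oldRemark} together with $|y(m)\Lambda^\infty|=1$ pushes $y$ onto an actual line point. This is correct and arguably cleaner: it recycles the socle machinery already built in \S\ref{sec: min left ideal} and avoids the ad~hoc $\pi_l$-degree bookkeeping. The paper's computation, on the other hand, is self-contained and does not need Theorem~\ref{thm: soc cor line} or Lemma~\ref{oldRemark} as inputs, so it keeps the logical dependencies of Theorem~\ref{thm: KP semisimple} slightly lighter.
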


\begin{rmk}\label{rmk inf imply aperiodic}Condition~\eqref{part graph cond} of Theorem~\ref{thm: KP semisimple} implies that every infinite path is aperiodic.  To see this, suppose $y\in \Lambda^\infty$ and pick $m$ such that $y(m)\in P_l(\Lambda)$.  Then $y(m)\Lambda^\infty=\{\sigma^m(y)\}$ with $\sigma^m(y)$ aperiodic.  Since $\sigma^m(y)$ is aperiodic, so is $y$. \end{rmk}

\begin{proof}[Proof of Theorem~\ref{thm: KP semisimple}] We will show $\eqref{part ss} \Rightarrow \eqref{part matrix dec}\Rightarrow \eqref{part vN R and N}\Rightarrow\eqref{part vN R and atomic}\Rightarrow\eqref{part graph cond}\Rightarrow\eqref{part ss}$.

\smallskip\eqref{part ss} $\Rightarrow$ \eqref{part matrix dec}.  Recall that an idempotent $e$ in a semiprime ring $R$ is minimal if $Re$ is a minimal left-ideal \cite[p.~143]{BMM96}.   Theorem~2.3  of  \cite{APPM10}  says that  a semiprime ring $R$ is  semisimple if and only if there exists an index set $\Upsilon$ and for each $i\in \Upsilon$ there exist minimal idempotents $e_i\in R$ and $n_i\in\N\cup\{\infty\}$ such that $R\cong\sum_{i\in\Upsilon}M_{n_i}(e_iRe_i)$.   Since $Re_i$ is a minimal left-ideal in $R$,  \cite[Proposition~4.3.3]{BMM96} shows that $e_iRe_i$ is a division ring.  Assume $\KP_K(\Lambda)$ is semisimple.  By Lemma~\ref{lem: KP nondegenerate}, $\KP_K(R)$ is semiprime.  Applying \cite[Theorem~2.3]{APPM10} to $R=\KP_K(\Lambda)$ we obtain  $\Upsilon$, $e_i$ and $n_i$ as above. By Lemma~\ref{lem local units}, $\KP_K(\Lambda)$ has a countable set of local units, and thus $\Upsilon$ is countable by \cite[Theorem~2.3]{APPM10}.  By Lemma~\ref{lem: div ring}, each  $e_i\KP_K(\Lambda)e_i$ is isomorphic  to $K$. This gives \eqref{part matrix dec}.

\smallskip
\eqref{part matrix dec} $\Rightarrow$ \eqref{part vN R and N}.  Assume \eqref{part matrix dec}. Each $M_{n_i}(K)$ is von Neumann regular, and hence so is the direct sum. Since $V(M_{n_i}(K))=\N$ and $V(R_1\oplus R_2)=V(R_1)\oplus V(R_2)$, \eqref{part vN R and N} holds.

\smallskip  \eqref{part vN R and N} $\Rightarrow$ \eqref{part vN R and atomic}. This follows because $\oplus_{i\in \Upsilon}\N$  cancellative and atomic.

\smallskip
\eqref{part vN R and atomic} $\Rightarrow$ \eqref{part graph cond}.
By \cite[3rd example on p.~412]{Ara97}, ``$\pi$-regular rings'' are exchange rings.  Since $\pi$-regular rings are a generalization of von Neumann regular rings it follows that von Neumann regular rings are exchange rings.  By \cite[Proposition~1.2]{AGOP98}, if $R$ is an exchange ring then $V(R)$ has the refinement property.  

Assume \eqref{part vN R and atomic}. Then $V(\KP_K(\Lambda))$ has the refinement property. Let $y\in \Lambda^\infty$.  By Lemma~\ref{lem: R A give one inf},  there exists $t\in \N^k$ such that $y(t)\Lambda^\infty=\{\sigma^t(y)\}$.
We claim that  $y(t)$ is a line point.

By way of contradiction,  suppose that $y(t)$ is not a line point. Since $y(t)\Lambda^\infty=\{\sigma^t(y)\}$, $\sigma^t(y)$  must be periodic. For convenience set $z:=y(t,\infty)$; then $z(0)\Lambda^\infty=\{z\}$ and $z$ is periodic. So there exists $m\neq n\in \N^k$ such that $z(m)=z(n)$.

 Let  $\pi_h:\N^k\to \N$  be the projection onto the $h$-th factor. By switching $m$ and $n$ if necessary, we may  assume that  there exists an $l\in \{1,\ldots, k\}$ such that $\pi_l(m-n)>0$.  Let $v=z(0)$. Consider the element $p_v+s_{z(0,m)}s_{z(0,n)^*}$.  Since we assumed that $\KP_K(\Lambda)$ is von Neumann regular, there exists $b\in \KP_K(\Lambda)$ such that 
\begin{equation}\label{eq vN reg compare}
(p_v+s_{z(0,m)}s_{z(0,n)^*})b(p_v+s_{z(0,m)}s_{z(0,n)^*})=p_v+s_{z(0,m)}s_{z(0,n)^*}.
\end{equation}
Since both sides of \eqref{eq vN reg compare} are nonzero we can assume $b\in p_v \KP_K(\Lambda)p_v$.  Write $b$ in normal form as $b=\sum_{(\alpha,\beta)\in F}r_{\alpha,\beta}s_{\alpha}s_{\beta^*}$. Then for all $(\alpha,\beta),(\mu,\nu)\in F$  we have $d(\beta)=d(\nu)$, and, since $|v\Lambda^\infty|=1$, we have $\beta=\nu$.  So we can assume that 
$$b=\sum_{i=1}^N r_i s_{\alpha_i}s_{\beta^*}\quad r_i\in K\setminus \{0\}$$
for some fixed $\beta$.  Furthermore, we can  assume that the $\alpha_i$ are ordered so that $\pi_l(d(\alpha_i))\leq \pi_l(d(\alpha_{i'}))$ for all $1\leq i\leq i'\leq N$.

We will obtain a contradiction by comparing the degrees on the left and right  sides of \eqref{eq vN reg compare}. Define the $\pi_l$-degree of a homogeneous element to be the $l$-th component of its degree.  The minimal $\pi_l$-degree associated to the left side of \eqref{eq vN reg compare} is obtained by the monomial $p_vs_{\alpha_1}s_{\beta^*}p_v=s_{\alpha_1}s_{\beta^*}$.  The minimal $\pi_l$-degree appearing in the right side of \eqref{eq vN reg compare} is zero.  So $\pi_l(d(\alpha_1)-d(\beta))=0$.   The maximal $\pi_l$-degree on the left hand side of \eqref{eq vN reg compare} is obtained by the monomial $s_{z(0,m)}s_{z(0,n)^*}s_{\alpha_N}s_{\beta^*}s_{z(0,m)}s_{z(0,n)^*}$ whose $\pi_l$-degree is $2\pi_l(m-n)+\pi_l(d(\alpha_N)-d(\beta))$.  By Lemma~\ref{lem: nonzero}, $s_{z(0,m)}s_{z(0,n)^*}s_{\alpha_N}s_{\beta^*}s_{z(0,m)}s_{z(0,n)^*}\neq 0$. Since the maximal $\pi_l$-degree on the right hand side of \eqref{eq vN reg compare} is $\pi_l(m-n)>0$, we have $2\pi_l(m-n)+\pi_l(d(\alpha_N)-d(\beta))=\pi_l(m-n)$.  Thus $\pi_l(d(\alpha_N)-d(\beta))=-\pi_l(m-n)<0$.  Now $0=\pi_l(d(\alpha_1)-d(\beta))\leq\pi_l(d(\alpha_N)-d(\beta))<0$, a contradiction.  Thus $z=\sigma^t(y)$ is aperiodic, and $y(t)$ is a line point as claimed.  This gives \eqref{part graph cond}.

\smallskip
\eqref{part graph cond} $\Rightarrow$ \eqref{part ss}.   Assume \eqref{part graph cond}.   By Corollary~\ref{cor: 0 soc plus}, we need  to show that $\overline{P_l(\Lambda)}=\Lambda^0$.  
By way of contradiction suppose there exists $v\in \Lambda^0\setminus \overline{P_l(\Lambda)}$.  We define a path $y\in v\Lambda^\infty$ by recursion.  Since $v$ is in the complement of the saturated set $\overline{P_l(\Lambda)}$, there exists a path $\mu_1\in v\Lambda^{(1,\ldots ,1)}$ such that $s(\mu_1)\notin  \overline{P_l(\Lambda)}$.  Suppose that $\mu_i$ has been defined is such a way that $r(\mu_i)=s(\mu_{i-1})$, $d(\mu_i)=(1,\ldots, 1)$ and $s(\mu_i)\notin \overline{P_l(\Lambda)}$.  Because $\overline{P_l(\Lambda)}$ is saturated, there exists  $\mu_{i+1}\in s(\mu_i)\Lambda^{(1,\ldots,1)}$ such that $s(\mu_{i+1})\notin\overline{P_l(\Lambda)}$.  Set $y=\mu_1\mu_2\cdots\in v\Lambda^\infty$.  By \eqref{part graph cond}, there exists $m\in \N^k$ such that $y(m)\in P_l(\Lambda)$.  But for $i$ sufficiently large, $i\cdot (1,\ldots, 1)>m$ and since $P_l(\Lambda)$ is hereditary,  $s(\mu_{i})\in P_l(\Lambda)$,  a contradiction.  Thus $\Lambda^0=\overline{P_l(\Lambda)}$ and hence $\KP_K(\Lambda)$ is semisimple.\end{proof}

\begin{rmk}
A simple Kumjian-Pask algebra need not be semisimple.  To see this,  pick any row-finite, aperiodic and cofinal $k$-graph $\Lambda$ which contains a periodic path. Then $\KP_K(\Lambda)$ is simple by \cite[Theorem~6.1]{A-PCaHR11}, but is not semisimple by Theorem~\ref{thm: KP semisimple}.
\end{rmk}

Our next goal is to find a more precise description of the matrix decomposition of  semisimple Kumjian-Pask algebras given by item~\eqref{part matrix dec} of Theorem~\ref{thm: KP semisimple}. We will show that $\KP_K(\Lambda)$ is isomorphic to $\bigoplus_{[v]\in P_l(\Lambda)/\sim} I_{\overline{\{v\}}}$ where $\sim$ is an equivalence relation on the set of line points and $I_{\overline{\{v\}}}$ is the graded ideal associated to the saturated hereditary  subset $\overline{\{v\}}$ of $\Lambda^0$.

Let $v,w\in P_l(\Lambda)$ so that $v\Lambda^\infty=\{x\}$ and  $w\Lambda^\infty=\{y\}$. Define a relation on $P_l(\Lambda)$ by 
$$v\sim w \Leftrightarrow \exists ~ m,n\in\N^k\text{ such that } x(m)=y(n).$$  It follows from item~\eqref{prop: line equiv disjoint} of Lemma~\ref{lem: lin pt ideal} that $\sim$ defines an equivalence relation on $P_l(\Lambda)$.

Let $H$ be a saturated hereditary  subset of $\Lambda^0$ and recall that the \emph{quotient graph} $\Lambda\setminus H$ is
\begin{equation}\Lambda\setminus H:=(\Lambda^0\setminus H, s\inv(\Lambda^0\setminus H), r|,s|);\label{eq: quot graph}
\end{equation}
$\Lambda\setminus H$ is a row-finite $k$-graph with no sources by \cite[Lemma~5.3]{A-PCaHR11}. 

\begin{lem}\label{lem: lin pt ideal} Let  $v, w\in P_l(\Lambda)$, say $v\Lambda^\infty=\{y\}$ and $w\Lambda^\infty=\{z\}$.  
\begin{enumerate}
\item \label{lin ideal cont}If $u\in \overline{\{v\}}$, then $\overline{\{u\}}=\overline{\{v\}}.$  
\item \label{prop: line equiv disjoint}
$\overline{\{w\}}\cap \overline{\{v\}}\neq \emptyset\quad\Leftrightarrow\quad \overline{\{w\}}= \overline{\{v\}}\quad \Leftrightarrow \quad w\sim v.$
\item\label{lem: lin ideal min}If $\Lambda$ contains no periodic paths then the ideal $I_{\overline{\{v\}}}$ is minimal.
\end{enumerate}
\end{lem}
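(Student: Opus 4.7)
The overall strategy is to prove (1) first, since the equivalences of (2) then follow cleanly, and to combine (1) with Lemma~\ref{lem: strengthen}\eqref{lem: strengthen 2} to establish (3). Throughout I write $y$ and $z$ for the unique elements of $v\Lambda^\infty$ and $w\Lambda^\infty$. Because $v$ is a line point, Remark~\ref{many remarks}\eqref{many remarks one} gives $v\Lambda^n=\{y(0,n)\}$ for every $n$, so the smallest hereditary subset containing $\{v\}$ is $\Sigma^0(\{v\})=\{y(n):n\in\N^k\}$; similarly for $w$ and $z$.

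For (1) the inclusion $\overline{\{u\}}\subset\overline{\{v\}}$ is automatic since $\overline{\{v\}}$ is saturated hereditary and contains $u$. For the reverse it suffices to show $v\in\overline{\{u\}}$. Applying Lemma~\ref{oldRemark} with $W=\{v\}$, the hypothesis $u\in\overline{\{v\}}$ produces $\mu\in u\Lambda\Sigma^0(\{v\})$, so $s(\mu)=y(p)$ for some $p\in\N^k$. Since $\overline{\{u\}}$ is hereditary, $y(p)\in\overline{\{u\}}$, and since $v$ is a line point, $s(v\Lambda^p)=\{y(p)\}\subset\overline{\{u\}}$; saturation of $\overline{\{u\}}$ then forces $v\in\overline{\{u\}}$. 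Part (2) is now formal: any common element $u\in\overline{\{v\}}\cap\overline{\{w\}}$ satisfies $\overline{\{v\}}=\overline{\{u\}}=\overline{\{w\}}$ by (1), which yields the first equivalence. If $\overline{\{v\}}=\overline{\{w\}}$ then $w\in\overline{\{v\}}$, and Lemma~\ref{oldRemark} combined with the line-point property of $w$ gives $\mu\in w\Lambda$ with $z(d(\mu))=s(\mu)=y(p)$, so $v\sim w$. Conversely, if $y(m)=z(n)$ then this common vertex lies in $\Sigma^0(\{v\})\cap\Sigma^0(\{w\})\subset\overline{\{v\}}\cap\overline{\{w\}}$.

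For (3), assume $\Lambda$ has no periodic infinite paths and let $J$ be a nonzero ideal of $\KP_K(\Lambda)$ with $J\subset I_{\overline{\{v\}}}$. Choose $0\neq a\in J$; since every vertex of $\Lambda$ admits (only) aperiodic infinite paths, Lemma~\ref{lem: strengthen}\eqref{lem: strengthen 2} produces $\gamma,\eta\in\Lambda$ and $0\neq r\in K$ with $s_{\gamma^*}as_\eta=rp_{s(\gamma)}$, so $w:=s(\gamma)$ satisfies $p_w\in J$ and hence $w\in H_J$. Because $J\subset I_{\overline{\{v\}}}$ we have $H_J\subset H_{I_{\overline{\{v\}}}}=\overline{\{v\}}$, putting $w\in\overline{\{v\}}$, whereupon (1) yields $\overline{\{w\}}=\overline{\{v\}}$. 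Since $H_J$ is saturated hereditary and contains $w$ it contains $\overline{\{v\}}$; combined with $H_J\subset\overline{\{v\}}$ this forces $H_J=\overline{\{v\}}$, and hence $I_{\overline{\{v\}}}=I_{H_J}\subset J$, giving $J=I_{\overline{\{v\}}}$.

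The main subtlety is in part (1): at first it appears that $u\in\overline{\{v\}}$ only encodes information about paths emanating from $u$ (via the hereditary-plus-saturation procedure) and not about any way of reaching $v$, so $v\in\overline{\{u\}}$ is not obviously plausible. The resolution is that the line-point condition on $v$ makes $s(v\Lambda^p)$ a singleton, so once $y(p)$ enters $\overline{\{u\}}$ via hereditarity, saturation has no choice but to place $v$ itself into $\overline{\{u\}}$.
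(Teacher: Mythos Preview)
Your proofs of (1) and (2) match the paper's essentially line for line: the paper too uses Lemma~\ref{oldRemark} to reach $\Sigma^0(\{v\})$ from $u$, then invokes $|v\Lambda^{d(\lambda)}|=1$ and saturation; part (2) is dispatched by the same chain of implications.

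For (3) you take a genuinely different route. The paper argues that the absence of periodic paths makes every quotient graph $\Lambda\setminus H$ aperiodic, invokes \cite[Theorem~5.6]{A-PCaHR11} to conclude that \emph{every} ideal of $\KP_K(\Lambda)$ is graded, and then uses the lattice isomorphism $H\mapsto I_H$ onto all ideals: since (1) shows $\overline{\{v\}}$ is a minimal saturated hereditary subset, $I_{\overline{\{v\}}}$ is minimal. You instead work directly inside a putative nonzero subideal $J$, using Lemma~\ref{lem: strengthen}\eqref{lem: strengthen 2} to extract a vertex projection $p_w\in J$, and then (1) plus the inclusions $I_{H_J}\subset J\subset I_{\overline{\{v\}}}$ force equality. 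Your argument is more elementary in that it avoids the heavier \cite[Theorem~5.6]{A-PCaHR11}, while the paper's approach is more structural and makes explicit why \emph{all} ideals (not just those below $I_{\overline{\{v\}}}$) are governed by the lattice of saturated hereditary sets. Both are correct.
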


\begin{proof} \eqref{lin ideal cont} Suppose $u\in \overline{\{v\}}$. Then $\overline{\{u\}}\subset \overline{\{v\}}$.  For the reverse inclusion it suffices to show that $v\in\overline{\{u\}}$. Recall that $\Sigma^0(\{v\})$ is the hereditary closure of $\{v\}$. Since $u\in \overline{\{v\}}$, by Lemma~\ref{oldRemark}, there exists $\mu\in u\Lambda\Sigma^0(\{v\})$. But $\overline{\{u\}}$ is hereditary, so $s(\mu)\in\overline{\{u\}}\cap \Sigma^0(\{v\})$.
Because $\Sigma^0(\{v\})$ is the hereditary closure of $\{v\}$  there exists a path $\lambda\in v\Lambda s(\mu)\subset v\Lambda\overline{\{u\}}$.  Since $v\in P_l(\Lambda)$, $|v\Lambda^{d(\lambda)}|=1$  by Remark~\ref{many remarks}\eqref{many remarks one}. That is, $v\Lambda^{d(\lambda)}=\{\lambda\}$.  Hence $s(v\Lambda^{d(\lambda)})\subset \overline{\{u\}}$ which implies $v\in\overline{\{u\}}$ because $\overline{\{u\}}$ is saturated.

\smallskip

\eqref{prop: line equiv disjoint}
First, assume $\overline{\{w\}}\cap \overline{\{v\}}\neq \emptyset$. Let $u\in \overline{\{w\}}\cap \overline{\{v\}}$. By \eqref{lin ideal cont}, we have $\overline{\{w\}}=\overline{\{u\}}=\overline{\{v\}}$.

Second, assume $\overline{\{w\}}=\overline{\{v\}}$. Then $v\in \overline{\{w\}}$, so there exists $\mu\in v\Lambda\Sigma^{0}(\{w\})$ by Lemma~\ref{oldRemark}. Since $s(\mu)\in \Sigma^{0}(\{w\})$, there exists $\nu\in w\Lambda s(\mu)$.  Now $v\Lambda^\infty=\{y\}$ and $w\Lambda^\infty=\{z\}$ so $\mu=y(0,d(\mu))$ and $\nu=z(0,d(\nu))$.   But $y(d(\mu))=s(\mu)=s(\nu)=z(d(\nu))$, and hence $v\sim w$.

Third, assume $v\sim w$. There exist $n,m\in\N^k$ such that $y(m)=z(n)$. But $\overline{\{v\}}$ and $\overline{\{w\}}$ are hereditary, so $y(m)\in \overline{\{v\}}\cap \overline{\{w\}}\neq \emptyset$.

\smallskip

\eqref{lem: lin ideal min} Now suppose that $\Lambda$ contains no periodic paths. Then $\Lambda\setminus H$  is aperiodic for all saturated hereditary  subsets $H$ of $\Lambda^0$, and \cite[Theorem~5.6]{A-PCaHR11} implies that every ideal of $\KP_K(\Lambda)$ is graded. Thus $H\mapsto I_H$ is an isomorphism of the lattice of saturated hereditary  subsets of $\Lambda$ onto the lattice of ideals of $\KP_K(\Lambda)$. By \eqref{lin ideal cont}, $\overline{\{v\}}$ is a minimal saturated hereditary  subset, and hence the ideal $I_{\overline{\{v\}}}$ is a minimal ideal.
\end{proof}

Next we want to show that the ideals $\{I_{\overline{\{v\}}}\}_{[v]\in P_l(\Lambda)/\sim}$ are mutually orthogonal.   
For ideals $I,J$ in a ring $R$ we let $IJ:=\{ab:a\in I, b\in J\}$ and $I\cdot J:=\spn\{ab:a\in I, b\in J\}.$ Then $I\cdot J$ is a two-sided ideal in $R$,  and $IJ\subset I\cdot J \subset I\cap J$.

\begin{lem}\label{lem: ideal intersections} Let $H$ and $L$ be saturated hereditary subsets of $\Lambda^0$.  Then $I_H\cap I_L=I_{H\cap L}=I_H\cdot I_L$.  In particular, if $H\cap L=\emptyset$ then $I_HI_L=\{0\}$. 
\end{lem}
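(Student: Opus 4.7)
The plan is to use the lattice isomorphism $H \mapsto I_H$ from \cite[Theorem~5.1]{A-PCaHR11} to handle the first equality, then bootstrap from $I_H \cap I_L = I_{H\cap L}$ to the second equality by exhibiting generators of $I_{H\cap L}$ as products from $I_H$ and $I_L$.

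First I would check that $H\cap L$ is a saturated hereditary subset of $\Lambda^0$: hereditariness and saturation each pass to intersections directly from the definitions. Next I would verify that the intersection of two graded ideals is graded (the homogeneous decomposition of an element in $I_H\cap I_L$ is componentwise in both), so the lattice of graded ideals has meet given by intersection. Since the lattice isomorphism $H\mapsto I_H$ preserves meets, this immediately gives $I_H\cap I_L = I_{H\cap L}$.

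For the equality $I_H\cdot I_L = I_{H\cap L}$, the inclusion $I_H\cdot I_L\subset I_H\cap I_L = I_{H\cap L}$ is automatic from $I_H\cdot I_L\subset I_H\cap I_L$ (and here I would note in passing that $I_H\cdot I_L$ is itself a two-sided ideal, since ring multiplication on either side of a product $ab$ with $a\in I_H$, $b\in I_L$ stays inside $I_H\cdot I_L$). For the reverse containment, I would recall that $I_{H\cap L}$ is generated as an ideal by $\{p_v : v\in H\cap L\}$, and observe that for such $v$ one has $p_v = p_v p_v$ with $p_v\in I_H$ and $p_v\in I_L$, so each generator lies in $I_H\cdot I_L$. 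Since $I_H\cdot I_L$ is an ideal, this forces $I_{H\cap L}\subset I_H\cdot I_L$.

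The final statement is then immediate: if $H\cap L=\emptyset$, the ideal $I_{H\cap L}$ is generated by the empty set and hence is $\{0\}$, so $I_HI_L\subset I_H\cdot I_L = \{0\}$. I do not expect any serious obstacles; the only point that requires minor care is the verification that intersections of graded ideals are graded, so that the lattice isomorphism actually transports the intersection operation on one side to the intersection operation on the other.
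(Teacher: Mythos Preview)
Your proposal is correct and essentially matches the paper's proof: both use that the intersection of graded ideals is graded together with the lattice isomorphism $H\mapsto I_H$ to get $I_H\cap I_L=I_{H\cap L}$, and both obtain $I_{H\cap L}\subset I_H\cdot I_L$ by writing each generator $p_v=p_vp_v\in I_H\cdot I_L$. The only cosmetic difference is that the paper phrases the first equality as ``$I_H\cap I_L=I_W$ for some saturated hereditary $W$, and one checks $W=H\cap L$,'' whereas you invoke preservation of meets directly; these amount to the same argument.
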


\begin{proof} The second statement follows immediately from the first because $I_HI_L\subset I_H\cdot I_L=I_{H\cap L}$.  Recall that $H\mapsto I_H$ is an isomorphism of the lattice of saturated hereditary  subsets of $\Lambda$ onto the lattice of graded ideals of $\KP_K(\Lambda)$. Since the intersection of graded ideals is graded, $I_H\cap I_L=I_W$ for some saturated hereditary  subset $W$ of $\Lambda^0$.  Using that $I_W$ is generated by $p_v$ with $v\in W$, it is straightforward to show that $W=H\cap L$. Thus $I_H\cap I_L=I_{H\cap L}$

By definition of $I_H\cdot I_L$ we have $I_H\cdot I_L\subset I_H\cap I_L=I_{H\cap L}$.  For the reverse inclusion, let $p_v\in I_{H\cap L}$. Then $v\in H\cap L$, and hence $p_v\in I_H$ and $p_v\in I_L$. Now $p_v=p_v^2\in I_H\cdot I_L$.  Since $I_{H\cap L}$ is generated by $p_v$ with $v\in H\cap L$ and $I_H\cdot I_L$ is an ideal we get $I_{H\cap L}\subset I_H\cdot I_L$.
 \end{proof} 
 
 We now have another corollary of Theorem~\ref{thm: soc cor line}. 

\begin{cor}\label{cor: KP dir sum ideal} Let $\Lambda$ be a row-finite $k$-graph with no sources and let $K$ be a field.  If $\KP_K(\Lambda)$ semisimple then $\KP_K(\Lambda)=\bigoplus_{[v]\in P_l(\Lambda)/\sim} I_{\overline{\{v\}}}$.\end{cor}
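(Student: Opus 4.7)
The plan is to combine the hypothesis of semisimplicity (which by Corollary~\ref{cor: 0 soc plus} forces $\overline{P_l(\Lambda)}=\Lambda^0$) with the structural facts about the ideals $I_{\overline{\{v\}}}$ already established in Lemmas~\ref{lem: lin pt ideal} and \ref{lem: ideal intersections}. Two things must be verified: that the sum $\sum_{[v]}I_{\overline{\{v\}}}$ equals $\KP_K(\Lambda)$, and that the sum is in fact direct.

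First I would show that $\sum_{[v]\in P_l(\Lambda)/\sim}I_{\overline{\{v\}}}=I_{\overline{P_l(\Lambda)}}$. Each $\overline{\{v\}}\subset\overline{P_l(\Lambda)}$, so $I_{\overline{\{v\}}}\subset I_{\overline{P_l(\Lambda)}}$ and the sum is contained in $I_{\overline{P_l(\Lambda)}}$. Conversely, the sum is an ideal containing $p_v$ for every $v\in P_l(\Lambda)$, so by Lemma~\ref{lem: ideal vert} applied to $W=P_l(\Lambda)$ it contains $I_{\overline{P_l(\Lambda)}}$. By semisimplicity and Corollary~\ref{cor: 0 soc plus}, $I_{\overline{P_l(\Lambda)}}=\KP_K(\Lambda)$, which gives the desired equality of the sum with $\KP_K(\Lambda)$.

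For directness, suppose $\sum_{[v]}a_{[v]}=0$ where $a_{[v]}\in I_{\overline{\{v\}}}$ and only finitely many of the $a_{[v]}$ are nonzero. Fix a class $[w]$. Then
\[
a_{[w]}\KP_K(\Lambda)\Big(\sum_{[v]}a_{[v]}\Big)=\{0\},
\]
and I would split this into the diagonal term and the cross-terms. Because $a_{[w]}\KP_K(\Lambda)\subset I_{\overline{\{w\}}}$, each cross-term satisfies $a_{[w]}\KP_K(\Lambda) a_{[v]}\subset I_{\overline{\{w\}}}\cdot I_{\overline{\{v\}}}$. For $[v]\neq [w]$, Lemma~\ref{lem: lin pt ideal}\eqref{prop: line equiv disjoint} gives $\overline{\{w\}}\cap\overline{\{v\}}=\emptyset$, and then Lemma~\ref{lem: ideal intersections} yields $I_{\overline{\{w\}}}\cdot I_{\overline{\{v\}}}=\{0\}$. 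Hence the cross-terms vanish and $a_{[w]}\KP_K(\Lambda)a_{[w]}=\{0\}$. Since $\KP_K(\Lambda)$ is nondegenerate by Lemma~\ref{lem: KP nondegenerate}, $a_{[w]}=0$.

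Each step uses tools already in place, so there is no serious obstacle; the only subtle point is choosing the \emph{product} of ideals rather than their intersection when proving directness, since the product $I_{\overline{\{v\}}}\cdot I_{\overline{\{w\}}}=\{0\}$ for $[v]\neq[w]$ is exactly what lets nondegeneracy convert $a_{[w]}\KP_K(\Lambda)a_{[w]}=\{0\}$ into $a_{[w]}=0$.
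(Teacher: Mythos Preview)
Your proof is correct and follows essentially the same route as the paper's. Both arguments use Lemma~\ref{lem: lin pt ideal}\eqref{prop: line equiv disjoint} together with Lemma~\ref{lem: ideal intersections} to get $I_{\overline{\{v\}}}\cdot I_{\overline{\{w\}}}=\{0\}$ for $[v]\neq[w]$, and both use the description of the socle to see that the sum of the $I_{\overline{\{v\}}}$ exhausts $\KP_K(\Lambda)$; the only difference is that the paper asserts directness directly from mutual annihilation of the ideals, whereas you spell out the underlying nondegeneracy argument explicitly.
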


\begin{proof} Let $v,w \in P_l(\Lambda)$ such that $v\not\sim w$. By Lemma~\ref{lem: lin pt ideal}\eqref{prop: line equiv disjoint}, ${\overline{\{v\}}}\cap {\overline{\{w\}}}=\emptyset$, so by Lemma~\ref{lem: ideal intersections} we have  $I_{\overline{\{v\}}}I_{\overline{\{w\}}}=\{0\}$.   Thus the internal direct sum $\bigoplus_{[v]\in P_l(\Lambda)/\sim} I_{\overline{\{v\}}}$ is an ideal in $\KP_K(\Lambda) $. 

Since $\KP_K(\Lambda)$ is semisimple, $\Soc(\KP_K(\Lambda))=\KP_K(\Lambda)$. But $\Soc(\KP_K(\Lambda))$ is the ideal generated by $\{p_v:v\in P_l(\Lambda)\}$ by Theorem~\ref{thm: soc cor line}. Thus $\KP_K(\Lambda)\subset \bigoplus_{[v]\in P_l(\Lambda)/\sim} I_{\overline{\{v\}}}$, that is, $\KP_K(\Lambda)=\bigoplus_{[v]\in P_l(\Lambda)/\sim} I_{\overline{\{v\}}}$.
\end{proof}

Next we analyze the ideals $I_{\overline{\{v\}}}$ for $v\in P_l(\Lambda)$.

\begin{lem}\label{lem minfty} 
\begin{enumerate}
\item\label{lem: contain inf mat}
If  $v\in P_l(\Lambda)$ then $I_{\overline{\{v\}}}$ contains a subalgebra isomorphic to $M_\infty(K)$.
\item\label{cor: simple and ss} 
If $\KP_K(\Lambda)$ is both simple and semisimple then $\KP_K(\Lambda)\cong M_\infty(K)$.
\item\label{prop line Minfty} 
If $\KP_K(\Lambda)$ is semisimple then  $I_{\overline{\{v\}}}\cong M_\infty(K)$ for all $v\in P_l(\Lambda)$. 
\end{enumerate}
\end{lem}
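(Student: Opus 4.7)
For part~\eqref{lem: contain inf mat}, my plan is to construct matrix units explicitly. Let $v\in P_l(\Lambda)$ with $v\Lambda^\infty=\{x\}$, and for $m,n\in\N^k$ define
\[
e_{m,n}:=s_{x(m,\,m\vee n)}\,s_{x(n,\,m\vee n)^*}.
\]
Each $e_{m,n}$ is nonzero since both paths end at $x(m\vee n)$, and since $p_{x(m\vee n)}\in I_{\overline{\{v\}}}$ we have $e_{m,n}\in I_{\overline{\{v\}}}$. The aim is to verify the matrix-unit relations
\[
e_{m,n}\,e_{p,q}=\delta_{n,p}\,e_{m,q};
\]
because each $e_{m,n}$ is nonzero these relations force $K$-linear independence, exhibiting $\spn_K\{e_{m,n}:m,n\in\N^k\}$ as a subalgebra of $I_{\overline{\{v\}}}$ isomorphic to $M_\infty(K)$.

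The heart of the verification is the middle product $s_{x(n,\,m\vee n)^*}\,s_{x(p,\,p\vee q)}$. If $n\neq p$ then by Remark~\ref{many remarks}\eqref{many remarks two} and aperiodicity of $x$ we have $x(n)\neq x(p)$, and (KP1)--(KP2) give $s_{x(n,\,m\vee n)^*}\,s_{x(p,\,p\vee q)}=s_{x(n,\,m\vee n)^*}p_{x(n)}p_{x(p)}s_{x(p,\,p\vee q)}=0$. If $n=p$, set $P:=m\vee n\vee q$. The line-point identity $p_{x(n\vee q)}=s_{x(n\vee q,P)}s_{x(n\vee q,P)^*}$ from Remark~\ref{many remarks}\eqref{many remarks four}, together with the factorizations $x(n,P)=x(n,m\vee n)x(m\vee n,P)=x(n,n\vee q)x(n\vee q,P)$ and (KP3), forces the middle product to equal $s_{x(m\vee n,P)}s_{x(n\vee q,P)^*}$. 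Concatenating the outer factors via (KP2) yields $e_{m,n}e_{n,q}=s_{x(m,P)}s_{x(q,P)^*}$, and a final application of the line-point identity at $x(m\vee q)$ collapses this to $e_{m,q}$.

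Part~\eqref{cor: simple and ss} follows from \eqref{lem: contain inf mat} and Theorem~\ref{thm: KP semisimple}\eqref{part matrix dec}: semisimplicity yields $\KP_K(\Lambda)\cong\bigoplus_{i\in\Upsilon}M_{n_i}(K)$, and simplicity collapses the sum to $\KP_K(\Lambda)\cong M_n(K)$ for some $n\in\N\cup\{\infty\}$. Corollary~\ref{cor: 0 soc plus} provides some $v\in P_l(\Lambda)$ since $\KP_K(\Lambda)\neq\{0\}$, and simplicity combined with the lattice isomorphism of \cite[Theorem~5.1]{A-PCaHR11} forces $\overline{\{v\}}=\Lambda^0$, so $I_{\overline{\{v\}}}=\KP_K(\Lambda)$ contains $M_\infty(K)$ by \eqref{lem: contain inf mat}, whence $n=\infty$. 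Part~\eqref{prop line Minfty} reduces similarly: Remark~\ref{rmk inf imply aperiodic} shows that semisimplicity makes every infinite path aperiodic, so $\Lambda$ contains no periodic paths and Lemma~\ref{lem: lin pt ideal}\eqref{lem: lin ideal min} makes each $I_{\overline{\{v\}}}$ a minimal two-sided ideal. In the decomposition $\KP_K(\Lambda)\cong\bigoplus_{i\in\Upsilon}M_{n_i}(K)$ every minimal two-sided ideal is one of the simple summands $M_{n_i}(K)$, so $I_{\overline{\{v\}}}\cong M_{n_i}(K)$ for some $i$, and \eqref{lem: contain inf mat} forces $n_i=\infty$.

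The main obstacle is the computation in part~\eqref{lem: contain inf mat} when $n=p$ but $m\vee n$, $n\vee q$, and $m\vee q$ are all distinct. The key maneuver is to lift the product to the common extension $P=m\vee n\vee q$, where the line-point identity of Remark~\ref{many remarks}\eqref{many remarks four} lets us freely interchange idempotents $p_{x(\cdot)}$ with the ghost-path products $s_\lambda s_{\lambda^*}$ at any vertex along $x$, and then use (KP3) at matched degrees to collapse. Aperiodicity plays a dual role: it supplies orthogonality for $n\neq p$, while hereditariness of $P_l(\Lambda)$ (Remark~\ref{many remarks}\eqref{many remarks three}) ensures the line-point identity is actually valid at every $x(m\vee n)$, $x(n\vee q)$, and $x(m\vee q)$ encountered in the computation.
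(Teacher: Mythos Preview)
Your proof is correct, and in places cleaner than the paper's, though the route for part~\eqref{lem: contain inf mat} differs.

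For part~\eqref{lem: contain inf mat}, the paper restricts to a single coordinate axis: it sets $e_{i,j}$ for $i,j\in\N$ using the paths $y((i,0,\ldots,0),(j,0,\ldots,0))$ and their ghosts, so that the matrix-unit verification becomes a short case analysis on the relative order of $i,j,\ell$ along one axis, with no need to pass to a common join $P$. Your $\N^k$-indexed construction via $m\vee n$ is more symmetric and arguably more natural for a $k$-graph, at the cost of the extra ``lift to $P$ and collapse'' computation you outline. Both arguments use aperiodicity (through Remark~\ref{many remarks}\eqref{many remarks two}) to ensure distinct indices give distinct vertices and hence orthogonality; you make this explicit where the paper leaves it implicit. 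One small remark: you should note that $\N^k$ is countably infinite, so your matrix-unit span really is isomorphic to $M_\infty(K)$.

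For part~\eqref{prop line Minfty}, the paper takes a substantially longer detour: it uses Corollary~\ref{cor: KP dir sum ideal} to write $I_{\overline{\{v\}}}\cong\KP_K(\Lambda)/J$, identifies $J=I_L$ for a saturated hereditary $L$, applies \cite[Proposition~5.5]{A-PCaHR11} to get $I_{\overline{\{v\}}}\cong\KP_K(\Lambda\setminus L)$, and then verifies by hand that $\Lambda\setminus L$ is cofinal so that part~\eqref{cor: simple and ss} applies to the quotient. Your route---invoke Lemma~\ref{lem: lin pt ideal}\eqref{lem: lin ideal min} to see $I_{\overline{\{v\}}}$ is a minimal two-sided ideal, then observe that in the decomposition $\bigoplus_i M_{n_i}(K)$ the minimal two-sided ideals are exactly the simple summands---is a genuine simplification. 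The only step worth making explicit is the standard fact that in a direct sum of simple rings with local units every two-sided ideal has the form $\bigoplus_{i\in S} M_{n_i}(K)$, so the minimal ones are precisely the individual summands.
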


\begin{proof} \eqref{lem: contain inf mat} Let $v\in P_l(\Lambda)$.  Then $v\Lambda^\infty=\{y\}$. It suffices to show $I_{\overline{\{v\}}}$ contains a set of matrix units indexed by $\N\times \N$.     For $i,j\in \N$ define
$$e_{i,j}=\begin{cases} s_{y((i,0,\ldots,0), (j,0,\ldots,0))} & \text{if~} i<j\\
s_{y((j,0,\ldots,0), (i,0,\ldots,0))^*} & \text{if~} j<i\\
p_{y(j,0,\ldots,0)} & \text{if~} i=j.\end{cases}$$

We claim $\{e_{i,j}\}$ forms a set of matrix units in $I_{\overline{\{v\}}}$.  First note that $e_{i,j}e_{h,\ell}=0$ unless $j=h$.    A case by case analysis shows $e_{i,j}e_{j,\ell}=e_{i,\ell}.$  For example: 
suppose $i<\ell<j$. Then $|v\Lambda^{j-\ell}|=1$ since  $v\in P_l(\Lambda)$.  Thus
\begin{align*}e_{i,j}e_{j,\ell}&= s_{y((i,0,\ldots,0), (j,0,\ldots,0))} s_{y((\ell,0,\ldots,0), (j,0,\ldots,0))^*}\\
&{=}s_{y((i,0,\ldots,0), (\ell,0,\ldots,0))}s_{y((\ell,0,\ldots,0), (j,0,\ldots,0))} s_{y((\ell,0,\ldots,0), (j,0,\ldots,0))^*}\\
\intertext{which by Remark~\ref{many remarks}\eqref{many remarks four} is}
&{=}s_{y((i,0,\ldots,0), (\ell,0,\ldots,0))}p_{y(\ell,0,\ldots,0)}=e_{i,\ell}.\end{align*}
The other cases follow from similar arguments.  Thus $\{e_{i,j}\}_{i,j\in \N}$ forms a set of matrix units in $I_{\overline{\{v\}}}$, and hence $M_\infty(K)$ is isomorphic to a subalgebra of $I_{\overline{\{v\}}}$. 

\eqref{cor: simple and ss} Suppose that $\KP_K(\Lambda)$ is semisimple. Then $\KP_K(\Lambda)\cong \bigoplus_{i\in \Upsilon} M_{n_i}(K)$ by Theorem~\ref{thm: KP semisimple}($\eqref{part ss}\Rightarrow \eqref{part matrix dec}$).  Since $\KP_K(\Lambda)$ is simple this direct sum has only one summand $M_m(K)$ for some $m\in \N\cup\{\infty\}$.    By \eqref{lem: contain inf mat},   $M_\infty(K)$ is a subalgebra of $\KP_K(\Lambda)$. Since $M_\infty(K)$ cannot be isomorphic to a subalgebra of $M_n(K)$ for $n\in \N$, $m=\infty$ as desired.

\eqref{prop line Minfty} Suppose that $\KP_K(\Lambda)$ is semisimple. We will show that $I_{\overline{\{v\}}}$ is isomorphic to a simple Kumjian-Pask algebra and then invoke \eqref{cor: simple and ss} to get the result. Since $\KP_K(\Lambda)$ is semisimple it follows from Corollary~\ref{cor: KP dir sum ideal} that $I_{\overline{\{v\}}}\cong \KP_K(\Lambda)/J$ where 
\[J=\bigoplus_{[w]\in (P_l(\Lambda)/\sim)\setminus\{[v]\}}I_{\overline{\{w\}}}.
\] It follows from Theorem~\ref{thm: KP semisimple}\eqref{part graph cond}   that $\Lambda$ has no periodic paths (see Remark~\ref{rmk inf imply aperiodic}).  So for any saturated hereditary set $H$, $\Lambda\setminus H$ is aperiodic, and therefore every ideal of $\KP_K(\Lambda)$ is graded by \cite[Theorem~5.6]{A-PCaHR11}.   In particular   $J$ is graded, and hence $J=I_L$ for some saturated hereditary subset $L$ of $\Lambda^0$ by \cite[Theorem~5.1]{A-PCaHR11}. Now by \cite[Proposition~5.5]{A-PCaHR11}, $I_{\overline{\{v\}}}\cong \KP_K(\Lambda\setminus L)$.

Next we will show that $\KP_K(\Lambda\setminus L)$ is simple and semisimple.  
Since $\KP_K(\Lambda)$ is semisimple 
every infinite path $y\in\Lambda^\infty$ contains a line point by Theorem~\ref{thm: KP semisimple}. But now every infinite path $y\in(\Lambda\setminus L)^\infty$ contains a line point as well, so $\KP_K(\Lambda\setminus L)$ is semisimple by Theorem~\ref{thm: KP semisimple}.

As observed above,  $\Lambda\setminus L$ is aperiodic. Theorem~6.1 of \cite{A-PCaHR11} says that $\KP_K(\Lambda\setminus L)$ is simple if and only if $\Lambda\setminus L$ is cofinal.
  Pick $x\in (\Lambda\setminus L)^\infty$, $w\in (\Lambda\setminus L)^0$ and $y\in w(\Lambda\setminus L)^\infty$.  We need to show  that there exists a $t$ such that $y(0,t)$ connects $w$ to $x$. 

Since we know that $x,y$ both contain line points, there exists $m,n\in\N^k$ such that $x(m), y(n)\in P_l(\Lambda)$.  Since $x(m), y(n)\notin L$ we have $p_{x(m)}, p_{y(n)}\notin J$, and hence $x(m), y(n)\in \overline{\{v\}}$.  Thus $x(m)\sim y(n)$ by Lemma~\ref{lem: lin pt ideal}\eqref{prop: line equiv disjoint}. By definition of $\sim$ there exist $m',n'$ such that  $x(m+m')=\sigma^m(x)(m')=\sigma^n(y)(n')=y(n+n')$. Now $y(0, n+n')$ connects $w$ to $x$ and hence $\Lambda\setminus L$ is cofinal.  Thus $\KP_K(\Lambda\setminus L)$ is simple. Now we apply \eqref{cor: simple and ss} to see $I_{\overline{\{v\}}}\cong \KP_K(\Lambda\setminus L)\cong M_\infty (K)$.\end{proof}

The next result sharpens  condition~\eqref{part matrix dec} of Theorem~\ref{thm: KP semisimple}.

\begin{thm}\label{thm: KP ss matrix}Let $\Lambda$ be a row-finite $k$-graph with no sources and $K$ be a field.  Then $\KP_K(\Lambda)$ is semisimple if and only if $\KP_K(\Lambda)\cong \bigoplus_{[v]\in (P_l(\Lambda)/\sim)} M_\infty(K)$. \end{thm}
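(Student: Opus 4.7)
The plan is to prove both directions using results already assembled in the paper, with essentially no new work required beyond stitching.

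For the forward direction, assume $\KP_K(\Lambda)$ is semisimple. First I would invoke Corollary~\ref{cor: KP dir sum ideal} to write
\[
\KP_K(\Lambda)=\bigoplus_{[v]\in P_l(\Lambda)/\sim} I_{\overline{\{v\}}},
\]
where the equivalence relation $\sim$ on line points is well-defined by Lemma~\ref{lem: lin pt ideal}\eqref{prop: line equiv disjoint} and the sum is internal direct by the orthogonality produced from Lemma~\ref{lem: ideal intersections}. Then for each representative $v\in P_l(\Lambda)$, apply Lemma~\ref{lem minfty}\eqref{prop line Minfty} to identify $I_{\overline{\{v\}}}\cong M_\infty(K)$. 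Substituting these isomorphisms summand by summand gives
\[
\KP_K(\Lambda)\cong \bigoplus_{[v]\in P_l(\Lambda)/\sim} M_\infty(K),
\]
as desired.

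For the backward direction, assume $\KP_K(\Lambda)\cong \bigoplus_{[v]\in P_l(\Lambda)/\sim} M_\infty(K)$. Since $P_l(\Lambda)$ is a countable subset of the countable set $\Lambda^0$, the index set $P_l(\Lambda)/\sim$ is countable. Thus the hypothesis of Theorem~\ref{thm: KP semisimple}\eqref{part matrix dec} is satisfied (with $\Upsilon=P_l(\Lambda)/\sim$ and each $n_i=\infty$), so $\KP_K(\Lambda)$ is semisimple by that theorem.

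No step here looks like a genuine obstacle, since the heavy lifting has been done: the identification of each $I_{\overline{\{v\}}}$ with $M_\infty(K)$ was carried out in Lemma~\ref{lem minfty}\eqref{prop line Minfty}, and the direct-sum decomposition indexed by $P_l(\Lambda)/\sim$ was established in Corollary~\ref{cor: KP dir sum ideal}. The only minor point to verify carefully is that the equivalence classes $[v]$ are exactly those indexing the decomposition, which follows immediately from Lemma~\ref{lem: lin pt ideal}\eqref{prop: line equiv disjoint} since $\overline{\{v\}}=\overline{\{w\}}$ iff $v\sim w$.
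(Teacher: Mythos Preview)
Your proposal is correct and follows essentially the same route as the paper: the forward direction is exactly Corollary~\ref{cor: KP dir sum ideal} followed by Lemma~\ref{lem minfty}\eqref{prop line Minfty}, and the converse is the implication $\eqref{part matrix dec}\Rightarrow\eqref{part ss}$ of Theorem~\ref{thm: KP semisimple}. Your explicit verification that $P_l(\Lambda)/\sim$ is countable is a small but welcome addition that the paper leaves implicit.
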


\begin{proof}Suppose that $\KP_K(\Lambda)$ is semisimple. Then Corollary~\ref{cor: KP dir sum ideal} gives that  $\KP_K(\Lambda)=\bigoplus_{[v]\in (P_l(\Lambda)/\sim)} I_{\overline{\{v\}}}$ and so   $\KP_K(\Lambda)\cong \bigoplus_{[v]\in (P_l(\Lambda)/\sim)} M_\infty(K)$ by Lemma~\ref{lem minfty}\eqref{prop line Minfty}.  The converse is immediate from Theorem~\ref{thm: KP semisimple} ($\eqref{part matrix dec}\Rightarrow \eqref{part ss}$).\end{proof}

\begin{rmk}\label{rmk-ss} Theorem~\ref{thm: KP ss matrix} shows that every semisimple Kumjian-Pask algebra can be obtained as a Leavitt path algebra, that is,  the added generality of $k$-graphs gives nothing new for semisimple algebras. For $n\in \N\cup \{\infty\}$ it is easy to construct a $1$-graph $\Lambda_n$ whose Kumjian-Pask algebra is isomorphic to $\bigoplus_{i=1}^n M_\infty (K)$. Just take $\Lambda_n$ to be the following graph with $n$ vertical components.  
$$\xymatrix{\bullet \ar[r] &\bullet &\bullet \ar[l]\ar[r]& \bullet &\cdots&\bullet &\bullet \ar[l]\ar[r] &\bullet\\
						\bullet \ar[u] &\bullet \ar[u]  &\bullet \ar[u]&\bullet \ar[u]&\cdots &\bullet \ar[u]&\bullet \ar[u] & \bullet\ar[u] \\
						\bullet \ar[u] &\bullet \ar[u]  &\bullet \ar[u]&\bullet \ar[u]&\cdots &\bullet \ar[u]&\bullet \ar[u] &\bullet\ar[u] \\
						\vdots &\vdots &\vdots & \vdots & \cdots &\vdots &\vdots &\vdots }$$
By inspection $\overline{P_l(\Lambda_n)}=\Lambda^0_n$ and $v\sim w\in P_l(\Lambda)$ if and only if $v$ and $w$ are in the same vertical component.  Thus by Theorem~\ref{thm: KP ss matrix}, $\KP_K(\Lambda_n)\cong\bigoplus_{i=1}^n M_\infty (K)$.
\end{rmk}


\end{document}